\newcommand{\diam}{\mathrm{diam}}
\renewcommand{\leq}{\leqslant}
\renewcommand{\geq}{\geqslant}
\newcommand{\be}{\begin{equation}}
\newcommand{\ee}{\end{equation}}
\newcommand{\p}{\partial}
\newcommand{\ol}{\overline}
\newcommand{\ul}{\underline}
\begin{document}
\newtheorem{claim}{Claim}
\newtheorem{theorem}{Theorem}[section]
\newtheorem{lemma}[theorem]{Lemma}
\newtheorem{corollary}[theorem]{Corollary}
\newtheorem{proposition}[theorem]{Proposition}
\newtheorem{question}{question}[section]
\newtheorem{definition}[theorem]{Definition}
\newtheorem{remark}[theorem]{Remark}

\numberwithin{equation}{section}

\title[Prescribed curvature equations]
{The Dirichlet problem for a class of curvature equations in Minkowski space}

\author[M. Guo]{Mengru Guo}
\address{School of Mathematics, Harbin Institute of Technology,
	Harbin, Heilongjiang, 150001, China}
\email{22B912007@stu.hit.edu.cn}

\author[H. Jiao]{Heming Jiao}
\address{School of Mathematics and Institute for Advanced Study in Mathematics, Harbin Institute of Technology,
         Harbin, Heilongjiang, 150001, China}
\email{jiao@hit.edu.cn}
\thanks{The second author is supported by the National Natural Science Foundation of China (Grant No. 12271126),
the Natural Science Foundation of Heilongjiang Province (Grant No. YQ2022A006),
and the Fundamental Research Funds for the Central Universities (Grant No. HIT.OCEF.2022030).}

\begin{abstract}
In this paper, we study the Dirichlet problem for a class of prescribed curvature equations in Minkowski space.
We prove the existence of smooth spacelike hypersurfaces
with a class of prescribed curvature and general boundary data based on establishing the \emph{a priori} $C^2$ estimates.

{\em Keywords:} Minkowski space; Prescribed curvature equations; \emph{a priori} $C^2$ estimates.
\end{abstract}

\maketitle

\section{Introduction}

Let $\mathbb{R}^{n,1}$ be the Minkowski space, i.e., the space $\mathbb{R}^n \times \mathbb{R}$ equipped with the metric
\[
ds^2=dx^2_1+\cdots+dx^2_n-dx^2_{n+1}.
\]
In \cite{JaoSun22}, the second author and Sun studied a class of curvature equations with constant boundary condition
in a domain of Euclidean space.
The purpose of the current work is to extend the results of \cite{JaoSun22} to the Minkowski case with general boundary data.
Unlike the Euclidean case, bad terms including the square of curvatures appear
when we differentiate the equations twice in the Minkowski context.
That is why it is still an open problem whether the Dirichlet problem for $k$-curvature equations
\begin{equation}
\label{sigmak}
\sigma_{k} (\kappa [M_u]) = \psi
\end{equation}
are solvable
for $3\leq k \leq n-3$, although it is well known long ago for corresponding equations in the Euclidean context,
where $M_u=\{(x,u(x)): x\in\Omega\}$ is the graphic hypersurface defined by the function $u$ and
\[
\sigma_{k} (\kappa) = \sum_ {1 \leq i_{1} < \cdots < i_{k} \leq n}
\kappa_{i_{1}} \cdots \kappa_{i_{k}}
\]
are the $k$-th elementary symmetric functions, $k=1, \ldots, n$. The reader is referred to \cite{CNSV, Ivochkina90, Ivochkina91, ILT96} for
the study of the Dirichlet problem for \eqref{sigmak} and more general curvature equations in the ambient Euclidean space.

In this paper, we are concerned with the graph of a spacelike function $u$ defined in a bounded domain $\Omega \subset \mathbb{R}^n$.
Here the terminology ``spacelike'' means that
\[
\sup_{\ol{\Omega}}|Du|<1.
\]
It is easy to find that for a spacelike function $u$, the Minkowski metric restricted to $M_u$ defines a Riemannian metric on $M_u$:
\[
g_{ij} = \delta_{ij} - D_i u D_j u, \ 1\leq i,j\leq n.
\]
The second fundamental form of $M_u$ with respect to its upward unit normal vector field
\[\nu=\frac{(Du, 1)}{\sqrt{1-|Du|^2}}\]
is given by
\[h_{ij}=\frac{u_{ij}}{\sqrt{1-|Du|^2}}.\]
The principal curvatures $\kappa_1, \ldots, \kappa_n$ of $M_u$ are defined by the eigenvalues of
$\{h_{ij}\}$ with respect to $\{g_{ij}\}$.

A $C^2$ regular spacelike hypersurface $M_u$ is called $(\eta, n)$-convex if the principal curvatures
$\kappa = (\kappa_1, \ldots, \kappa_n) \in \Gamma$ at each $X \in M_u$, where $\Gamma$ is the symmetric cone defined by
\[
 \Gamma := \{\kappa = (\kappa_1, \ldots, \kappa_n) \in \mathbb{R}^n: \lambda_i = \sum_{j \neq i} \kappa_i > 0, i = 1, \ldots, n\}.
\]
In addition, we call a $C^2$ spacelike function $u: \Omega \rightarrow \mathbb{R}$ admissible if its graph
$M_u$ is $(\eta, n)$-convex.
Such hypersurface was introduced by Sha \cite{Sha86} and Wu \cite{Wu87} to describe the boundaries of Riemannian manifolds
which have the homotopy type of a CW-complex and was studied extensively in \cite{Sha87, HL13}.

Let $H (X)$ be the mean curvature of $M_u$ at $X \in M_u$. Define the $(0, 2)$-tensor field $\eta$ on $M$ by
\[
\eta = H g - h.
\]
It is clear that a hypersurface $M_u$ is $(\eta, n)$-convex if and only if $\eta$ is positive definite at each point of $M_u$.
The $(\eta, n)$-curvature at $X \in M_u$ is defined by $K_\eta (X) := \lambda_1 (X) \cdots \lambda_n (X)$, where
\[
\lambda_i (X) = \sum_{j\neq i} \kappa_j (X).
\]
Obviously, we have
\[
K_\eta (X) = \det (g^{-1} \eta (X)).
\]
In this paper, we consider the existence of smooth spacelike $(\eta, n)$-convex hypersurface satisfying the Dirichlet problem
\begin{equation}
	\label{jg--1}
	\left\{ \begin{aligned}
		K_\eta [M_u] & = \psi (x, u) & \;\; \mbox{ in } \Omega, \\
		u &= \varphi & \;\;~  \mbox{ on } \partial \Omega,
	\end{aligned} \right.
\end{equation}
where $\psi \in C^\infty (\ol \Omega \times \mathbb{R}) > 0$ and $\varphi \in C^\infty (\partial \Omega)$.
We usually need some geometric conditions on $\Omega$ when we consider Dirichlet boundary value problem.
A bounded domain $\Omega \subset \mathbb{R}^n$ is called admissible if
there exists a positive constant $K$ such that for each $x \in \partial \Omega$,
\[
(\kappa^b_1 (x), \ldots, \kappa^b_{n - 1} (x), K) \in \Gamma,
\]
where $\kappa^b_1 (x), \ldots, \kappa^b_{n - 1} (x)$ are the principal curvatures of $\partial \Omega$ at $x$.
Using almost the same arguments of Lemma 2.1 in \cite{Bayard03}, we can prove that
Every affine spacelike data admits an admissible extension to $\Omega$ if and only if $\Omega$
is convex and admissible.
\begin{theorem}
\label{js-thm1}
Assume that $\Omega$ is convex and admissible with smooth boundary $\partial \Omega$.
Suppose
$\psi = \psi (x,z) \in C^\infty (\ol \Omega \times \mathbb{R}) > 0$, $\psi_z \geq 0$ and $\varphi \in C^\infty (\partial \Omega)$.
In addition, assume that there exists an admissible subsolution $\underline{u} \in C^{2} (\overline{\Omega})$ satisfying
\begin{equation}
\label{subsol}
\left\{ \begin{aligned}
   K_\eta [M_{\underline{u}}] & \geq \psi (x, \ul u)  & \;\; \mbox{ in } \Omega,\\ 
                 \ul u &= \varphi  & \;\;~ \mbox{ on } \partial \Omega.
\end{aligned} \right.
\end{equation}
Then there exists a unique admissible solution $u \in C^{\infty} (\ol \Omega)$ to \eqref{jg--1}.
\end{theorem}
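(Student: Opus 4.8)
The plan is to solve \eqref{jg--1} by the continuity method, obtaining the solution at the parameter value $1$ from the openness and closedness of the set of parameters for which a smooth admissible solution exists. Write $F[u] := \big(\det(g^{-1}\eta[u])\big)^{1/n}$, so that \eqref{jg--1} reads $F[u]=\psi(x,u)^{1/n}$; here $\eta[u]_{ij} = H g_{ij} - h_{ij}$ is linear in $h_{ij} = u_{ij}/\sqrt{1-|Du|^2}$ and hence in $D^2u$, while $g$ and $h$ do not depend on $u$ directly. On admissible functions $\eta[u]>0$, so $F$ is elliptic there, and since $t\mapsto(\det t)^{1/n}$ is concave on the positive cone, $F$ is concave in $D^2u$. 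Setting $\ul\psi(x):=K_\eta[M_{\ul u}](x)$, which is smooth and positive because $\ul u$ is admissible, consider for $t\in[0,1]$ the family
\[
\left\{\begin{aligned} K_\eta[M_{u^t}] &= \psi^t(x,u^t):=t\,\psi(x,u^t)+(1-t)\,\ul\psi(x) & &\text{in }\Omega,\\ u^t &= \vp & &\text{on }\partial\Omega.\end{aligned}\right.
\]
Each $\psi^t$ is smooth and positive with $\partial_z\psi^t=t\psi_z\ge0$, the function $\ul u$ solves the $t=0$ problem, and \eqref{subsol} shows that $\ul u$ is a subsolution of the $t$-problem for every $t$. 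Let $I=\{t\in[0,1]:\text{the }t\text{-problem admits an admissible solution in }C^{2,\alpha}(\ol\Omega)\}$, so $0\in I$. Openness of $I$ follows from the implicit function theorem in Hölder spaces: the linearization at a solution is linear, second order, uniformly elliptic, and has nonpositive zeroth-order coefficient (because $\partial_z\psi^t\ge0$), hence is an isomorphism $C^{2,\alpha}_0(\ol\Omega)\to C^{\alpha}(\ol\Omega)$, and admissibility persists under small perturbations since it is an open condition.

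It remains to prove closedness, which reduces to the \emph{a priori} bound $\|u^t\|_{C^{2,\alpha}(\ol\Omega)}\le C$ independent of $t$ for admissible solutions: once this is available, the equation is uniformly elliptic along the whole family (the $\lambda_i$ are bounded above by the $C^2$ bound and below because $\prod\lambda_i=\psi^t$ is bounded below), so concavity together with Evans--Krylov yields a uniform $C^{2,\alpha}$ estimate, after which Schauder bootstrapping gives uniform $C^k$ estimates for all $k$; thus a $C^\infty$ solution of any limit problem exists and is admissible, $I$ is closed, and $I=[0,1]$. The zeroth-order bound $\ul u\le u^t\le\ol u$ comes from the comparison principle applied to the admissible subsolution $\ul u$ (using $\partial_z\psi^t\ge0$ and monotonicity of $\det$ on $\Gamma$) for the lower bound, and from the boundary data $\vp$, the gradient bound, and $\diam\Omega<\infty$ for the upper bound. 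For the gradient one must quantify spacelikeness, i.e. produce $\delta>0$ with $\sup_{\ol\Omega}|Du^t|\le1-\delta$: the interior estimate is obtained by the maximum principle applied to a suitable exponential multiple of $(1-|Du^t|^2)^{-1/2}$, using that $\psi^t$ is bounded below, and the boundary estimate by using $\ul u$ as a lower barrier and the convexity of $\Omega$ for an upper barrier.

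The crux, and where I expect the main obstacle, is the second-order estimate: as noted in the introduction, differentiating $K_\eta[M_u]=\psi$ twice in the Minkowski setting produces terms quadratic in the principal curvatures, absent in the Euclidean case. The strategy is first to reduce the global estimate to the boundary, bounding $\max_i\kappa_i$ at an interior maximum of a test function of the form $W=\big(\max_{|\xi|_g=1}h(\xi,\xi)\big)e^{\phi(|Du|^2)+N\ul u}$ (or a variant involving a support-type function), where one must exploit the structure of the $(\eta,n)$-convex cone $\Gamma$ (the room afforded by $\lambda_i=\sum_{j\neq i}\kappa_j$), the concavity inequality for $F$, and a careful grouping of the differentiated terms so as to absorb the curvature-squared contributions into the good third-order terms; this adapts the curvature-estimate techniques of \cite{JaoSun22} (and those of Guan--Ren--Wang type) to Lorentzian signature. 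The boundary $C^2$ estimate is then the standard barrier argument: pure tangential second derivatives are handled by differentiating $u^t=\vp$ along $\partial\Omega$, and the mixed and double-normal second derivatives by barriers in which the admissibility of $\Omega$ — the condition $(\kappa^b_1,\dots,\kappa^b_{n-1},K)\in\Gamma$ — and the subsolution $\ul u$ enter decisively. Combining the $C^0$, $C^1$, and $C^2$ estimates with Evans--Krylov and Schauder closes $I$, so \eqref{jg--1} has a smooth admissible solution; uniqueness follows from the comparison principle applied to two solutions via the mean-value form of the equation together with $\psi_z\ge0$.
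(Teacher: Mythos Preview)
Your overall architecture---continuity method, reduction to uniform $C^{2,\alpha}$ via Evans--Krylov, and the three-tier $C^0$/$C^1$/$C^2$ estimate---matches the paper's. However, several of the steps you pass over are exactly where the paper invests its technical effort, and at least two of your shortcuts do not work as stated.

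First, the continuity path: since $\ul u$ is only assumed $C^2$, your $\ul\psi(x)=K_\eta[M_{\ul u}](x)$ is merely continuous, not smooth, so the family $\psi^t$ is not regular enough to differentiate the equation or to apply Schauder theory along the path. This is fixable (approximate $\ul u$, or use a different homotopy), but it should be flagged. Second, for the upper barrier at the boundary you invoke only ``the convexity of $\Omega$''; that is not enough to produce a spacelike supersolution of a fully nonlinear curvature equation. The paper's device is to note $(K_\eta)^{1/n}\le\frac{n-1}{n}H$, so the spacelike solution $\ol u$ of the prescribed mean curvature problem $H[M_{\ol u}]=\frac{n}{n-1}\psi^{1/n}$ from Bartnik--Simon serves simultaneously as the $C^0$ upper bound and the boundary gradient upper barrier. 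Without this (or an equivalent supersolution) the quantitative spacelike estimate on $\partial\Omega$ is not secured.

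Third, the double-normal boundary estimate is much more delicate than ``barriers in which admissibility and the subsolution enter''. The paper follows Ivochkina--Lin--Trudinger: rather than bounding $u_{nn}$ directly, one shows that the tangential trace $S_{1;n}(D^2u,Du)$ has a uniform positive lower bound $c_0$ on $\partial\Omega$, by locating its minimum point $x_0$, constructing an auxiliary function $V$ built from $-A(x)\tilde\nabla_n(u-\varphi)$ and a squared-gradient corrector, proving a differential inequality $LV\le C(1+|DV|+\sum G^{ii}+G^{ij}V_iV_j)$ (this is the content of Lemma~\ref{gj-lem2} and requires a careful case split on the relative sizes of the $\kappa_i$ to absorb the bad $f_i|\kappa_i|$ terms), and then comparing with the barrier $\Psi=v-td+\tfrac{N}{2}d^2$. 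Once $u_{nn}(x_0)$ is bounded, the principal curvatures at $x_0$ lie in a compact subset of $\Gamma$, and the expansion \eqref{jg-4} forces $m\ge c_0$. Your outline does not touch this mechanism. Finally, for the interior/global $C^2$ bound the paper's test function is $\zeta^\alpha e^{b|X|^2/2}h_{\xi\xi}$ (Pogorelov type) rather than your $e^{\phi(|Du|^2)+N\ul u}$ variant, and the curvature-squared terms are handled by combining the Andrews--Gerhardt second-derivative formula with a dichotomy on whether all $|h_{ii}|\le\epsilon_0 h_{11}$; this is specific to the structure of $K_\eta$ and not just a generic Guan--Ren--Wang argument.
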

A necessary condition for \eqref{jg--1} being solvable is that $|\partial \varphi| < 1$ on $\partial \Omega$, where $\partial \varphi$ means the
gradient of $\varphi$ on the boundary $\partial \Omega$. It is also of interest to construct subsolutions satisfying \eqref{subsol}.
Using methods in Section 2 of \cite{CNSIII}, we may construct a function $v$ satisfying $\lambda (D^2 v) \in \Gamma$ on $\ol \Omega$,
$\sup_{\ol \Omega}|Dv| < 1$ and $v \big|_{\partial \Omega} \equiv \varphi$ in an admissible domain provided $\varphi$ can be extended to a convex spacelike
function on $\ol \Omega$. Thus, the function $v$ can be a subsolution to \eqref{jg--1} if $\sup\psi$ is sufficiently small. In particular,
we can construct a subsolution for spacelike affine $\varphi$ in this case. If $\Omega$ is strictly convex and $\varphi$ can be extended to
a spacelike strictly convex function on $\ol\Omega$, the strictly convex solution to the Lorentz-Gaussian curvature equation
\begin{equation}
\label{Lorentz-Gauss}
\left\{ \begin{aligned}
  \frac{\det(D^2 u)}{(1-|Du|^2)^{(n+2)/2}} & = \psi (x, u)  & \;\; \mbox{ in } \Omega,\\ 
                u &= \varphi  & \;\;~ \mbox{ on } \partial \Omega
\end{aligned} \right.
\end{equation}
can be a subsolution to \eqref{jg--1}. The reader is referred to \cite{Delano90} for the solvability of \eqref{Lorentz-Gauss}.
Therefore, we have
\begin{theorem}
\label{jg-thm1}
Let $\Omega$ be a bounded and strictly convex domain in $\mathbb{R}^n$ with $\partial \Omega \in C^{\infty}$.
Suppose
$\psi = \psi (x,z) \in C^\infty (\ol \Omega \times \mathbb{R}) > 0$ and $\psi_z \geq 0$.
Assume that $\varphi \in C^\infty (\ol \Omega)$ is spacelike and strictly convex.
Then there exists a unique admissible solution $u \in C^{\infty} (\ol \Omega)$ to \eqref{jg--1}.
\end{theorem}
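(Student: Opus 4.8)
The plan is to deduce Theorem~\ref{jg-thm1} from Theorem~\ref{js-thm1}, the only new task being to supply the admissible subsolution demanded by \eqref{subsol}; all of the hard analysis is already absorbed into Theorem~\ref{js-thm1}. First I would check the structural hypotheses. A bounded strictly convex $\Omega$ has $\kappa^b_1(x), \ldots, \kappa^b_{n-1}(x) > 0$ at every $x \in \partial\Omega$, so the $n$-tuple $(\kappa^b_1(x), \ldots, \kappa^b_{n-1}(x), 1)$ has all entries positive and hence lies in $\Gamma$; thus $\Omega$ is convex and admissible. Together with $\psi > 0$, $\psi_z \geq 0$, and $\varphi|_{\partial\Omega} \in C^\infty(\partial\Omega)$, every hypothesis of Theorem~\ref{js-thm1} is met except the existence of $\ul u$. (The necessary condition $|\partial\varphi| < 1$ holds automatically, since $\varphi$ is spacelike on $\ol\Omega$.)

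For the subsolution I would exploit the assumption that $\varphi$ itself is a spacelike strictly convex function on all of $\ol\Omega$. Then the Dirichlet problem \eqref{Lorentz-Gauss}, with right-hand side $\psi(x,z)$ and boundary data $\varphi|_{\partial\Omega}$, is exactly of the type treated in \cite{Delano90} (bounded strictly convex domain, positive $\psi$ monotone in $z$, boundary data extending to a spacelike strictly convex function), so \cite{Delano90} furnishes a spacelike, strictly convex solution $\ul u \in C^\infty(\ol\Omega)$ of \eqref{Lorentz-Gauss}. Since $\ul u$ is strictly convex and spacelike, its principal curvatures $\kappa_i$ are all positive, hence $\lambda_i = \sum_{j\neq i}\kappa_j > 0$ and $M_{\ul u}$ is $(\eta,n)$-convex; so $\ul u$ is admissible, with $\ul u = \varphi$ on $\partial\Omega$.

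It then remains to verify the differential inequality. Using $K_\eta[M_{\ul u}] = \det(g^{-1}\eta) = \prod_{i=1}^n \lambda_i$, together with $\det(g_{ij}) = 1 - |D\ul u|^2$ and $h_{ij} = \ul u_{ij}/\sqrt{1-|D\ul u|^2}$, which give $\prod_{i=1}^n \kappa_i = \det(g^{-1}h) = \det(D^2 \ul u)/(1-|D\ul u|^2)^{(n+2)/2}$, I would invoke the elementary inequality (arithmetic--geometric mean, valid for all $\kappa_1, \ldots, \kappa_n > 0$)
\[
\prod_{i=1}^n \lambda_i = \prod_{i=1}^n \Big(\sum_{j\neq i}\kappa_j\Big) \geq \prod_{i=1}^n (n-1)\Big(\prod_{j\neq i}\kappa_j\Big)^{1/(n-1)} = (n-1)^n \prod_{i=1}^n \kappa_i .
\]
Combining this with the fact that $\ul u$ solves \eqref{Lorentz-Gauss} and that $(n-1)^n \geq 1$ for $n \geq 2$ gives
\[
K_\eta[M_{\ul u}] \geq (n-1)^n\,\frac{\det(D^2 \ul u)}{(1-|D\ul u|^2)^{(n+2)/2}} = (n-1)^n\,\psi(x,\ul u) \geq \psi(x,\ul u) \quad \text{in } \Omega,
\]
so $\ul u$ satisfies \eqref{subsol}, and Theorem~\ref{js-thm1} then produces the unique admissible $u \in C^\infty(\ol\Omega)$ solving \eqref{jg--1}.

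I do not expect a genuine obstacle in this deduction: the two inputs -- Theorem~\ref{js-thm1} and the solvability of \eqref{Lorentz-Gauss} from \cite{Delano90} -- do all the work, and what is left is the one-line comparison between the $(\eta,n)$-curvature $\prod_i(\sum_{j\neq i}\kappa_j)$ and the Lorentz--Gauss curvature $\prod_i\kappa_i$, plus the routine observation that a strictly convex spacelike graph is automatically spacelike and $(\eta,n)$-convex. The genuinely difficult step is internal to Theorem~\ref{js-thm1}, namely the a priori $C^2$ estimates that must absorb the curvature-squared terms arising when \eqref{jg--1} is differentiated twice in the Minkowski setting; that is not revisited here.
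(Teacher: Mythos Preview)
Your proposal is correct and follows exactly the route the paper takes: the paper's ``proof'' of Theorem~\ref{jg-thm1} is the paragraph immediately preceding it, which says that when $\Omega$ is strictly convex and $\varphi$ extends to a spacelike strictly convex function, the solution of \eqref{Lorentz-Gauss} furnished by \cite{Delano90} serves as the subsolution required in Theorem~\ref{js-thm1}. You have supplied the one detail the paper leaves implicit, namely the AM--GM comparison $\prod_i \lambda_i \geq (n-1)^n \prod_i \kappa_i$ showing that a solution of \eqref{Lorentz-Gauss} indeed satisfies \eqref{subsol}.
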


The Dirichlet problem for prescribed mean curvature equation in Minkowski space (\eqref{sigmak} with $k=1$)
was first solved by Bartnik and Simon \cite{Bartnik82},
while the prescribed Lorentz-Gauss curvature equation \eqref{Lorentz-Gauss} was studied by Delano\`e \cite{Delano90} (see \cite{Guan98} also).
The Dirichlet problem for prescribed scalar curvature equation (\eqref{sigmak} with $k=2$) was solved by Bayard \cite{Bayard03}
with the dimension $n = 3, 4$ and by Urbas \cite{Urbas03} with general $n$. Schn\"{u}rer \cite{Sch02} considered a class of prescribed curvature
equations which exclude \eqref{sigmak} with $1<k<n$ and \eqref{jg--1}. There are also interesting works on curvature equations in Lorentzian manifolds
without boundary (see \cite{Gerhardt01, Gerhardt03}). Ren-Wang's methods in \cite{RW19, RW20} can apply to equation \eqref{sigmak} to solve the cases
$k=n-1$ and $k=n-2$. However, the solvability of \eqref{sigmak} is still an open problem for $3\leq k \leq n-3$. The reason is the lack of the
\emph{a priori} $C^2$ estimates. Huang \cite{Huang 13} established the second estimates for \eqref{sigmak} under an additional condition.
Recently, Wang-Xiao \cite{WX22} and Ren-Wang-Xiao \cite{RWX24} considered the Entire spacelike hypersurfaces with prescribed curvatures in Minkowski space.

Curvature equations in Euclidean context were extensively studied by various authors, we refer the reader to
\cite{CNSV, GS04, GRW15, Ivochkina90, Ivochkina91, ILT96, JW21, JaoSun22} and the references therein for related works.
In particular, \eqref{jg--1} in the Euclidean context with $\varphi \equiv$ constant was considered in \cite{JaoSun22}.

In this paper, we establish a Pogorelov type estimate for second order derivatives (Theorem \ref{gj-thm1}) where we have used a method from
\cite{Guan14} and \cite{Urbas02} to deal with bad third order terms. When the boundary data $\varphi$ is not constant, the estimates for 
double normal derivatives on the boundary become much more complicated. We shall use an idea of  \cite{Ivochkina91} and \cite{ILT96} to
overcome these difficulties.

The rest of this paper is organized as follows. In Section 2, we provided
some preliminaries. In Section 3, The $C^1$ estimates are established.
Section 4 and 5 are devoted to the global and boundary estimates for second order
derivatives respectively.



\section{Preliminaries}

In this work, $\varphi_i = \frac{\p \varphi}{\p x_i}$, $\varphi_{ij} = \frac{\p^2 \varphi}{\p x_i \p x_j}$,
$D\varphi=(\varphi_{1}, \cdots, \varphi_{n})$ and $D^2 \varphi = (\varphi_{ij})$ represent ordinary first-order and second-order derivatives, gradient and Hessian matrix of
a function $\varphi \in C^2 (\Omega)$ respectively.

Let $u$ be a spacelike function and $M_u$ its graph.
Let
$\epsilon_{n+1} = (0, \cdots, 0, 1) \in \mathbb{R}^{n+1}$. Thus,
the height function of $M_u$,
$u(x)=-\langle X, \epsilon_{n+1}\rangle$, where $\langle\cdot,\cdot\rangle$ is the Minkowski inner product.

We see that the principle curvatures of $M_u$ are eigenvalues of the matrix
\[
\frac{1}{w} \left(I + \frac{Du \otimes Du}{w^2}\right) D^2 u
\]
or the symmetric matrix
$A [u] = \{a_{ij}\}$:
\begin{equation}
\label{matrix}
a_{ij}=\frac{1}{w}\gamma^{ik}u_{kl}\gamma^{lj},
\end{equation}
where $\gamma^{ik}=\delta_{ik}+\frac{u_iu_k}{w(1+w)}$ and $w=\sqrt{1-|Du|^2}.$
Note that $\{\gamma^{ij}\}$ is invertible and its inverse is the square root of $\{g_{ij}\}$:
$\{\gamma_{ij}\}=\{\delta_{ij}-\frac{u_iu_j}{1+w}\}$.

For $r \in S^{n \times n}$ and $p \in \mathbb{R}^n$ with $|p| < 1$, define
\[
\lambda (r, p) = \lambda \left(\left(I + \frac{p \otimes p}{1 - |p|^2}\right) r\right)
\]
and
\[
S_k (r, p) = \sigma_k (\lambda (r,p)).
\]
Following \cite{ILT96}, we introduce the following notations. For $p \in \mathbb{R}^n$, $i = 1, \ldots, n$, let
$p(i)$ be the vector obtained by setting $p_i = 0$, $r (i)$ the matrix obtained by setting the $i^{th}$ row
and column to zero and $r (i,i)$ represent the matrix obtained by setting $r_{ii} = 0$. Denote
\[
S_{k,i} (r,p) = S_k (r(i), p(i)).
\]
Similar calculations as in \cite{ILT96} yield
\begin{equation}
	\label{ILT}
	S_k (r, p) = \frac{1 - |p(i)|^2}{1-|p|^2} r_{ii} S_{k-1; i} (r, p) + O (|r (i, i)|^k)
\end{equation}
for all $1\leq i \leq n$ and $1\leq k \leq n$, where $S_0$ is defined by $S_0 \equiv 1$.
More general, let $\tilde{e}_1, \ldots, \tilde{e}_n$ be local orthonormal frames in $\mathbb{R}^n$ and
write $e_i (x) = e^j_i (x)\partial_j$ for $i = 1, \ldots n$, where $\partial_1,\cdots,\partial_n$ is the rectangular coordinate system
on $\mathbb{R}^n$. Denote
\[
\tilde{\nabla}_i u = e_i^ju_j, \tilde{\nabla}_{ij} u = e_i^k e_j^l u_{kl}
\mbox{ and }
\tilde{\nabla}^2 u = \{\tilde{\nabla}_{ij} u\}.
\]
We have
\begin{equation}
	\label{ILT-1}
\begin{aligned}
	S_k (D^2 u, Du) = \,& S_k (\tilde{\nabla}^2 u, \tilde{\nabla} u) \\
  = \,& \frac{1 - |\tilde{\nabla} u(i)|^2}{1-|\tilde{\nabla} u|^2}
         \tilde{\nabla}_{ii} u S_{k-1; i} (\tilde{\nabla}^2 u, \tilde{\nabla} u) + O (|\tilde{\nabla}^2 u (i, i)|^k).
\end{aligned}
\end{equation}
When $n=2$, equation \eqref{jg--1} is the classic prescribed Gauss curvature equation. For general $n$,
\begin{equation}
	\label{jg-3}
	K_\eta [M] = K_\eta (\kappa) = \sum_{i=2}^n \sigma_1 (\kappa)^{n-i} \sigma_i (\kappa).
\end{equation}
By \eqref{jg-3} and \eqref{ILT-1}, we have
\begin{equation}
\label{jg-4}
\begin{aligned}
K_\eta (M_u) = \,& \frac{1}{w^n}\sum_{i=2}^n S_1^{n-i} (\tilde{\nabla}^2 u, \tilde{\nabla} u) S_i (\tilde{\nabla}^2 u, \tilde{\nabla} u)\\
  = \,& \frac{1}{w^n} \left(\frac{1 - |\tilde{\nabla} u(n)|^2}{1 - |\tilde{\nabla} u|^2}\right)^{n-1}  S_{1;n} (\tilde{\nabla}^2 u, \tilde{\nabla} u) (\tilde{\nabla}_{nn} u)^{n-1}\\
   & + \sum_{i=1}^{n-2} P_i (\tilde{\nabla}_{nn} u)^{i} + P_0,
\end{aligned}
\end{equation}
where $P_i$ depend only on $\tilde{\nabla}_{\alpha \beta} u$ ($\alpha + \beta < 2n$) and $\tilde{\nabla} u$, $i=0, 1, \ldots n-2$.

For $\kappa \in \Gamma$, let
\begin{equation}
\label{lambda}
\lambda_i := \sum_{j \neq i} \kappa_j, i = 1, \ldots, n.
\end{equation}
and
\begin{equation}
\label{def-h}
f (\kappa) := \lambda_1 \cdots \lambda_n.
\end{equation}
Thus, we find
\[
K_\eta [M_u] = f (\kappa),
\]
where $\kappa = (\kappa_1, \ldots, \kappa_n)$ are the principal curvatures of $M_u$.
We need some basic properties of $f (\kappa)$ (seeing \cite{JL20}).
First there exists an positive constant $\delta_0$ depending only on $n$ such that
\begin{equation}
\label{cj-10}
f_i (\kappa) = \frac{\partial f (\kappa)}{\partial \kappa_i} > 0, \mbox{ in } \Gamma, i = 1, \ldots, n,
\end{equation}
\begin{equation}
\label{cj-9}
f^{1/n} (\kappa) \mbox{ is concave in } \Gamma,
\end{equation}
\begin{equation}
\label{js-3}
f > 0 \mbox{ in } \Gamma \mbox{ and } f = 0 \mbox{ on } \partial \Gamma,
\end{equation}
and
\begin{equation}
\label{cj-6}
f_j (\kappa) \geq \delta_0 \sum_i f_i (\kappa), \mbox{ if } \kappa_j < 0, \forall \kappa \in \Gamma.
\end{equation}
In addition, for any constant $A > 0$ and any compact set $K$ in $\Gamma$
there is a number $R = R (A, K)$ such that
\begin{equation}
\label{cj-2}
f (\kappa_1, \ldots, \kappa_{n-1}, \kappa_n + R) \geq A, \mbox{ for all } \kappa \in K.
\end{equation}
 In this paper, we denote $\sigma_{m;i_1,\cdots,i_k}(\kappa)=\sigma_m(\kappa)|_{\kappa_{i_{1}}=\cdots=\kappa_{i_{k}}=0}$ for integer
 $1\leq i_1,\cdots,i_k\leq n, 1\leq m\leq n$
 and
 $n-k\leq m$.
Obviously, we have
 \[
 \frac{\partial\sigma_{k}}{\partial\kappa_{i}}(\kappa)=\sigma_{k-1;i}(\kappa),
 \]
\[
\sum_i \sigma_{k-1; i}(\kappa) = (n-k+1) \sigma_{k-1}(\kappa)\]
and
\[
\sum_i \sigma_{k-1; i} (\kappa) \kappa_i = k \sigma_k (\kappa).
\]

Choose local orthonormal
frames $\{e_1,e_2,\cdots,e_n\}$ on $TM_u$. $\nabla$ denotes the induced Levi-Civita connection on $M$. For a function $v$ on $M_u$, we denote $\nabla_i v=\nabla_{e_i}v,$
$\nabla_{ij} v = \nabla^2 v (e_i, e_j),$ etc in this paper.
Thus, we have
\[|\nabla u|=\sqrt{g^{ij}u_{i}u_{j}}=\frac{|Du|}{\sqrt{1-|Du|^2}}.\]

In normal coordinates, we have the following fundamental formulae and equations for the hypersurface $M$ in Minkowski space $\mathbb{R}^{n, 1}$:
\begin{equation}\label{Gauss}
\begin{aligned}
\nabla_{ij} X = \,& h_{ij}\nu \quad {\rm (Gauss\ formula)}\\
\nabla_i \nu= \,& h_{ij} e_j \quad {\rm (Weigarten\ formula)}\\
\nabla_k h_{ij} = \,& \nabla_j h_{ik} \quad {\rm (Codazzi\ equation)}\\
R_{ijst} = \,& -(h_{is}h_{jt}-h_{it}h_{js})\quad {\rm (Gauss\ equation)},
\end{aligned}
\end{equation}
where $h_{ij} = \langle D_{e_i}\nu,  e_j\rangle$ is the second fundamental form of $M$.

\section{$C^1$ estimates}

In this section, we establish the $C^1$ estimates for the admissible solution to \eqref{jg--1}. Indeed,
we prove there exists a constant $0 < \theta_0 < 1$ such that
\begin{equation}
	\label{spacelike}
	\sup_{\ol \Omega} |Du|\leq 1-\theta_0.
\end{equation}
First, by the arithmetic and geometric mean inequality, we see
\[
\left(K_\eta [M_u]\right)^{1/n} = \sqrt[n]{\lambda_1 \cdots \lambda_n} \leq \frac{\lambda_1 + \cdots + \lambda_n}{n} = \frac{n-1}{n} H,
\]
where $\lambda_i = \sum_{j\neq i}\kappa_j$, $i=1, \ldots, n$, $\kappa_1, \ldots, \kappa_n$ are the principal curvatures of $M_u$
and $H := \sigma_1 (\kappa)$ is the mean curvature of $M_u$.
By \cite{Bartnik82}, there exists a spacelike solution $\ol u \in C^\infty (\ol \Omega)$ to the Dirichlet problem
\[
	\left\{ \begin{aligned}
		H [M_{\ol u}] & = \frac{n}{n-1} \psi^{1/n} (x, \ol u) & \;\; \mbox{ in } \Omega, \\
		\ol u &= \varphi & \;\;~  \mbox{ on } \partial \Omega.
	\end{aligned} \right.
\]
Thus, by the maximum principle, we have
$$ \ul u \leq u \leq \ol u \text{ in } \Omega \text{ and } \ol u = u = \ul u \text{ on } \partial{\Omega},$$
which implies
\[
 \frac{\partial \ul u}{\partial \gamma} \leq \frac{\partial u}{\partial \gamma} \leq \frac{\partial \ol u}{\partial \gamma}.
\]
where $\gamma$ is the interior unit normal to $\partial \Omega$.
Then we have
\begin{equation}
\label{jg-7'}
\sup_{\ol \Omega} |u| \leq \max\{\sup_{\ol \Omega} |\ol u|, \sup_{\ol \Omega} |\ul u|\}
\end{equation}
and
\begin{equation}
\label{jg-7}
\sup_{\partial \Omega} |Du| \leq \max\{\sup_{\partial \Omega} |D \ul u|, \sup_{\partial \Omega} |D \ol u|\} \leq 1 - \theta
\end{equation}
for some constant $0 < \theta < 1$ since $\ul u$ and $\ol u$ are both spacelike.
%
Next we prove an upper bound for
$$ \tilde{w} =\frac{1}{\sqrt{1-|Du|^2}}=\frac{1}{w}.$$
\begin{theorem}
\label{gradient}
Let $u \in C^3 (\Omega) \cap C^1 (\ol \Omega)$ be an admissible solution
of \eqref{jg--1}. Suppose the smooth function $\psi$ satisfies $\psi_z \geq 0$.
Then
\begin{equation}
\label{gradient-1}
\sup_{\ol \Omega} \tilde{w}
\leq e^{\left(\frac{\sup_{\ol \Omega}|D\psi|}{n \inf_{\ol{\Omega}}\psi}(2 \sup_{\partial \Omega} |\varphi|) + \diam (\Omega) \right)}
\sup_{\partial \Omega} \tilde{w}.
\end{equation}
\end{theorem}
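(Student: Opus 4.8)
The plan is to establish \eqref{gradient-1} by a maximum‑principle argument applied, on $M_u$, to the auxiliary function
\[
\Phi := \log \tilde w + a\, u , \qquad a := \frac{\sup_{\ol\Omega}|D\psi|}{n\,\inf_{\ol\Omega}\psi},
\]
where all covariant derivatives are taken in a local orthonormal frame $\{e_i\}$ on $TM_u$ diagonalizing the second fundamental form $\{h_{ij}\}$, so that $h_{ij}=\kappa_i\delta_{ij}$ and $F^{ij}:=\partial f/\partial h_{ij}=f_i\delta_{ij}$; by \eqref{cj-10} the operator $L:=F^{ij}\nabla_{ij}$ is then elliptic with vanishing zeroth‑order coefficient. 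The computation rests on a few identities from \eqref{Gauss}: since $u=-\langle X,\epsilon_{n+1}\rangle$ and $\tilde w=-\langle\nu,\epsilon_{n+1}\rangle$, the Gauss and Weingarten formulae give $\nabla_{ij}u=\tilde w\,h_{ij}$ and $\nabla_i\tilde w=h_{ij}\nabla_j u$, whence
\[
\nabla_{ij}\tilde w=\nabla_j h_{ik}\,\nabla_k u+\tilde w\,h_{ik}h_{jk};
\]
moreover $|\nabla u|^2=\tilde w^2-1$ and, as a tangent field, $\nabla u=\tilde w\,\nu-\epsilon_{n+1}$.

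Next I would contract the displayed identity with $F^{ij}$, use the Codazzi equation $\nabla_j h_{ik}=\nabla_k h_{ij}$ and the differentiated equation $F^{ij}\nabla_k h_{ij}=\nabla_k\psi$, and pass to $\log\tilde w$, obtaining
\[
L(\log\tilde w)=\frac1{\tilde w}\,\nabla_k u\,\nabla_k\psi+\sum_i f_i\kappa_i^2\Bigl(1-\frac{(\nabla_i u)^2}{\tilde w^2}\Bigr)\ \ge\ \frac1{\tilde w}\,\nabla_k u\,\nabla_k\psi ,
\]
the inequality because $(\nabla_i u)^2\le|\nabla u|^2=\tilde w^2-1<\tilde w^2$. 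The crucial step is to control $\nabla_k u\,\nabla_k\psi$, which a priori grows like $\tilde w^2$: writing $\psi=\psi(x,u(x))$ and using the identity $\nabla_k u\,\nabla_k x_\ell=\tilde w^2 u_\ell$ (a consequence of $\nabla u=\tilde w\nu-\epsilon_{n+1}$), one computes
\[
\nabla_k u\,\nabla_k\psi=\tilde w^2\,(D_x\psi\cdot Du)+\psi_z\,|\nabla u|^2\ \ge\ -\,\sup_{\ol\Omega}|D\psi|\,\tilde w^2 ,
\]
where the hypothesis $\psi_z\ge 0$ is exactly what allows us to discard the second term (and $|Du|<1$ disposes of the first). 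This is the genuinely Minkowski‑flavoured point: unlike in Euclidean space, the second derivatives of $\tilde w$ generate a term of quadratic size in $\tilde w$, which cannot in general be absorbed without the monotonicity of $\psi$ in $z$.

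On the other hand, homogeneity of degree $n$ of $f$ and the Euler relation give $F^{ij}h_{ij}=\sum_i f_i\kappa_i=n f=n\psi$, so the Gauss formula supplies the favourably signed contribution of the corrector,
\[
L u=\tilde w\,F^{ij}h_{ij}=n\psi\,\tilde w\ >\ 0 ,
\]
its positivity reflecting that $\nu$ is timelike. Combining the previous displays,
\[
L\Phi=L(\log\tilde w)+a\,L u\ \ge\ \tilde w\bigl(a\,n\psi-\sup_{\ol\Omega}|D\psi|\bigr)\ \ge\ 0
\]
by the choice of $a$. Since $\Phi\in C^2(\Omega)\cap C^0(\ol\Omega)$ (here the hypothesis $u\in C^3(\Omega)\cap C^1(\ol\Omega)$ enters) and $L$ is elliptic with no zeroth‑order term, the weak maximum principle forces $\Phi$ to attain its maximum over $\ol\Omega$ on $\partial\Omega$. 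Finally, on $\partial\Omega$ we have $u=\varphi$, while the spacelike condition $|Du|<1$ makes $u$ Lipschitz with constant $1$ and hence controls $u$ over $\ol\Omega$ in terms of $\sup_{\partial\Omega}|\varphi|$ and $\diam(\Omega)$; inserting this into $\log\tilde w(x)\le\max_{\partial\Omega}\Phi-a\,u(x)$ and exponentiating yields \eqref{gradient-1}.

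I expect the main obstacle to be the second and third paragraphs: recognising that $L(\log\tilde w)$ carries a term of size $\tilde w^2$ and that it is matched, modulo the constant $a$, by the corrector $a\,u$ through $L u=n\psi\tilde w$. Making this work uses the homogeneity of $f$, the Codazzi equation and the timelike normal simultaneously, and it is precisely here that the sign condition $\psi_z\ge 0$ cannot be dropped. The remaining Gauss–Weingarten–Codazzi bookkeeping and the boundary comparison are then routine.
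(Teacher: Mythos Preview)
Your proposal is correct and follows essentially the same approach as the paper: both use the auxiliary function $\log\tilde w+Bu$, the identities $\nabla_{ij}u=\tilde w h_{ij}$, $\nabla_i\tilde w=h_{ij}\nabla_j u$, the Codazzi equation together with $F^{ij}\nabla_k h_{ij}=\nabla_k\psi$, the Euler relation $F^{ij}h_{ij}=n\psi$, and the hypothesis $\psi_z\ge 0$ to kill the unfavourable term. The only cosmetic difference is that the paper assumes an interior maximum and reaches a contradiction for $B>\sup|D\psi|/(n\inf\psi)$, whereas you take $a$ equal to this ratio and prove $L\Phi\ge 0$ directly; these are two phrasings of the same maximum-principle argument.
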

\begin{proof}
Let
\[
\hat{Q} := \tilde{w} e^{Bu},
\]
where 
$B$ is a positive constant to be determined later.
Suppose the maximum value of $\hat{Q}$ is achieved at an interior point $x_0 \in \Omega$. It follows that
\[
Q := \log \hat{Q} = \log \tilde{w} + Bu
\]
also attains its maximum at $x_0$. Let $\epsilon_1, \ldots, \epsilon_{n+1}$ be a standard basis of $\mathbb{R}^{n+1}$. We may assume
$u_1 (x_0) = |Du (x_0)|$ and $u_j (x_0) = 0$ for $j \geq 2$ by a rotation of $\epsilon_1, \ldots, \epsilon_{n}$ if necessary.
Let $\{e_1,e_2,\cdots,e_n\}$ be an orthonormal frame on $M_u$ around $X_0 = (x_0, u (x_0))$ such that, at $x_0$,
$$\nabla_1u=\frac{|Du|}{w}=|\nabla u|, \nabla_i u=u_i=0, \text{ for } i\geq 2.$$
We may set
\[
e_i = \gamma^{is} \tilde{\partial}_s, \ i =1, \ldots,n,
\]
where $\gamma^{is} := \delta_{is}+\frac{u_iu_s}{w(1+w)}$ and $\tilde{\partial}_s := \epsilon_s + u_s \epsilon_{n+1}$.
We may further rotate $\epsilon_2, \ldots, \epsilon_n$ such that $\{u_{ij}\}_{i,j \geq 2}$ is
diagonal at $x_0$.
By the Weingarten formula, we have
$$
\nabla_i \tilde{w} = -\nabla_i \langle \nu, \epsilon_{n+1} \rangle = -\langle h_{ij} e_j, \epsilon_{n+1} \rangle
  = - h_{ij} \langle \gamma^{js} \tilde {\partial_s}, \epsilon_{n+1}\rangle = h_{ij} \nabla_j u.
$$
At $x_0$ where $Q$ attains its maximum, we have
\begin{equation}
\label{gj-1}
0 = \nabla_i Q = \frac{\nabla _i \tilde{w}}{\tilde{w}} +B \nabla_i u
= \frac{h_{i1} \nabla_1 u}{\tilde{w}} + B \nabla_i u
\end{equation}
and
\begin{equation}
\label{gj-2}
	0 \geq \nabla_{ii} Q =
	 \frac{\nabla_i h_{i1} \nabla_1 u + h_{il} \nabla_{il} u}{\tilde{w}}
      - \frac{\left(h_{i1} \nabla_1 u \right)^2}{\tilde{w}^2} + B \nabla_{ii} u.
\end{equation}
We may assume $D u(x_0) \neq 0$ for otherwise we are done. In the rest of the proof, all the calculations are
carried out at $X_0$.
First we note that
\[
h_{11} = -B \tilde{w} \mbox{ and } h_{1i} = 0 \mbox{ for } i \geq 2
\]
by \eqref{gj-1} and the fact $\nabla_i u (x_0) = 0$ for $i \geq 2$. Since at $X_0$,
\[
h_{11} = \frac{u_{11}}{w^3}, h_{1i} = \frac{u_{1i}}{w^2} \mbox{ and }
h_{ij} = \frac{u_{ij}}{w}
  \mbox{ for } i, j \geq 2,
\]
we find that the matrix $\{h_{ij}\}$ is diagonal at $X_0$, and so is $\{F^{ij}\}$,
where
\[
F^{ij} := \frac{\partial f (\lambda (h))}{\partial h_{ij}}.
\]
Next, by the Codazzi equation and differentiating the equation \eqref{jg--1}, we have
\begin{equation}
\label{gj-3}
\begin{aligned}
F^{ii} \nabla_i h_{i1} = \,& F^{ii} \nabla_1 h_{ii} = \nabla_1 \psi
 = \psi_{x_j} \nabla_1 x_j + \psi_u \nabla_1 u \\
		= \,& \frac{\psi_{x_1}}{w} + \psi_u \nabla_1 u
		= \tilde{w} (\psi_{x_1} +\psi_u u_1 )
		\geq \tilde{w} {\psi_{x_1}},
\end{aligned}
\end{equation}
where the last inequality is due to that $\psi_u \geq 0$.
By the Gauss formula, we have
\begin{equation}
\label{gj-4}
\nabla_{ij} u=-\nabla_{ij}\langle X,\epsilon_{n+1}\rangle=-\langle \nabla_{ij}X,\epsilon_{n+1}\rangle =
-h_{ij}\langle \nu,\epsilon_{n+1}\rangle = \tilde{w} h_{ij}.
\end{equation}
Combining \eqref{gj-1}-\eqref{gj-4}, we obtain
\begin{equation}
\label{gj-5}
\begin{aligned}
		0 \geq F^{ii} \nabla_{ii} Q \geq \,& \psi_{x_1} u_1 \tilde{w}
				+ F^{ii} h_{ii}^2
		- B^2 F^{11} (\nabla_1 u)^2 + B n \psi \tilde{w}\\
  \geq \,& - |D \psi|\cdot |Du| \tilde{w} + B^2 \tilde{w}^2 F^{11} - B^2 F^{11} (\nabla_1 u)^2 + B n \psi \tilde{w}\\
  \geq \,& - |D \psi| \tilde{w} + B^2 F^{11} + B n \psi \tilde{w} \geq (B n \psi - |D \psi|) \tilde{w}.
\end{aligned}
\end{equation}
Then we get a contradiction provided
\[
B > \frac{\sup_{\ol \Omega}|D\psi|}{n \inf_{\ol{\Omega}}\psi}
\]
and
\eqref{gradient-1} follows as \cite{Bayard03}.
\end{proof}
It follows from \eqref{jg-7} and \eqref{gradient-1} that \eqref{spacelike} holds.

\section{Interior and global estimates for second order derivatives}
In this section, we consider the interior and global estimates for second order derivatives.

\begin{theorem}
\label{gj-thm1}
Let $\tilde{\varphi} \in C^2 (\Omega) \cap C^0 (\ol \Omega)$ be a spacelike convex function satisfying
$\tilde{\varphi} > u$ in $\Omega$ and $\tilde{\varphi} = u = \varphi$
on $\partial \Omega$. Then there exist positive constants $\alpha$ and $C$ depending
only on $n$, $\theta_0$ (defined in \eqref{spacelike}) and $\|\psi\|_{C^2}$ such that
\begin{equation}
\label{gj-6}
(\tilde{\varphi} - u)^\alpha |D^2 u| \leq C.
\end{equation}
\end{theorem}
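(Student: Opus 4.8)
The plan is to prove the interior estimate via the standard maximum-principle argument for the quantity
\[
W = \log \eta + \log \lambda_{\max}(h) + \tfrac{a}{2}|\nabla u|^2 + b \tilde w,
\]
where $\eta = \tilde\varphi - u > 0$ in $\Omega$ and vanishes on $\partial\Omega$, $\lambda_{\max}(h)$ is the largest principal curvature of $M_u$, and $a, b > 0$ are constants to be chosen large depending on $n, \theta_0, \|\psi\|_{C^2}$. (The $\tilde w$-term is controlled by Theorem \ref{gradient} and the $C^0$ bound, so including it is harmless; it is there to absorb gradient terms coming from differentiating the metric.) Since $\eta \to 0$ on $\partial\Omega$ while $W \to +\infty$ there only if $\lambda_{\max}$ blows up slower than $\eta^{-1}$, one first checks that if \eqref{gj-6} fails then $W$ must attain an interior maximum at some $X_0$; after rotating frames we may assume $\{h_{ij}\}$ is diagonal at $X_0$ with $h_{11} = \lambda_{\max}$ very large, and that $F^{ij} = \partial f/\partial h_{ij}$ is diagonal with $F^{11}$ the smallest of the $F^{ii}$ (since $f$ is symmetric and $\partial f/\partial\kappa$ reverses the ordering). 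Then I would apply the linearized operator $L = F^{ij}\nabla_{ij}$ to $W$ at $X_0$, using $\nabla_i W = 0$ and $L W \le 0$.

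The key computation is to expand $L(\log h_{11})$. Using the commutation formula $\nabla_{11} h_{ii} = \nabla_{ii} h_{11} + (\text{curvature terms})$ together with the Gauss equation $R_{ijst} = -(h_{is}h_{jt} - h_{it}h_{js})$ from \eqref{Gauss}, one gets
\[
L(\log h_{11}) \ge \frac{1}{h_{11}}\Big(\nabla_{11}\psi + \sum_i F^{ii} h_{ii}^2 h_{11} - \sum_i F^{ii} h_{1i1}^2 \cdot(\cdots)\Big) - \frac{1}{h_{11}^2}\sum_i F^{ii}(\nabla_i h_{11})^2 + (\text{l.o.t.}),
\]
and the \emph{bad} term is the appearance of $h_{ii}^2 h_{11}$ with the \emph{wrong} sign in the Minkowski setting (this is precisely the difficulty flagged in the introduction: "bad terms including the square of curvatures"). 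The crucial point is that $f$ is concave after taking the $n$-th root (property \eqref{cj-9}), so $\sum_{i,j} F^{ij,kl}\nabla_1 h_{ij}\nabla_1 h_{kl} \le (1 - \tfrac1n)\frac{(\nabla_1\psi)^2}{\psi}$, and moreover one has the standard inequality $F^{ij,kl}\xi_{ij}\xi_{kl} \le \frac{2}{h_{11}}\sum_{i\ge 2}\frac{F^{11} - F^{ii}}{h_{11} - h_{ii}}(\nabla_1 h_{1i})^2$-type control. Here I would invoke the method of Guan \cite{Guan14} and Urbas \cite{Urbas02} referenced in the introduction: split the indices into a "good" set $G = \{i : F^{ii} \ge \delta_1 F^{11}\}$ and a "bad" set $B$, use \eqref{cj-6} to handle negative $\kappa_i$ (where $F^{ii}$ is comparably large), and for the remaining bad third-order terms $\nabla_i h_{11}$ with $i \in B$ use the critical equation $\nabla_i W = 0$ to trade $\nabla_i h_{11}/h_{11}$ for $a\nabla_i(|\nabla u|^2/2) + b\nabla_i\tilde w + \nabla_i\eta/\eta$, whose squares are then absorbed by the positive terms $a F^{ii}|\nabla_{ii}u + h_{ii}\tilde w \cdot(\cdots)|$ coming from $L(\tfrac{a}{2}|\nabla u|^2)$ and by $\sum F^{ii}h_{ii}^2$ in the good directions, at the cost of enlarging $a$.

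Putting these together, at $X_0$ one arrives at
\[
0 \ge L W \ge -\frac{C_0}{\eta} - \frac{C_1}{\eta^2}\sum_i F^{ii}(\nabla_i\eta)^2 + c_0 h_{11} \sum_i F^{ii} + (\text{absorbed terms}) - C_2\sum_i F^{ii},
\]
after using $L(\log\eta) \ge \frac{1}{\eta}L\eta - \frac{1}{\eta^2}\sum F^{ii}(\nabla_i\eta)^2$ and the convexity of $\tilde\varphi$ (so $L(\tilde\varphi) \ge -C\sum F^{ii}$, while $L(-u) = L\eta - L\tilde\varphi$ and $F^{ij}\nabla_{ij}u = \tilde w F^{ij}h_{ij}$ is bounded by the equation). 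The $\eta^{-2}$ term is again handled by the critical-point relation to express $\nabla_i\eta/\eta$ in terms of $\nabla_i h_{11}/h_{11}$ plus lower order, feeding back into the same absorption. One then uses a lower bound $\sum_i F^{ii} \ge c(n) > 0$ valid on the admissible cone (which follows from $f^{1/n}$ concave and $f$ normalized by the equation, or directly from \eqref{cj-10}), to conclude that for $h_{11}$ large enough the term $c_0 h_{11}\sum F^{ii}$ dominates, forcing $h_{11}\eta^2 \le C$ at $X_0$, hence $W \le C$ everywhere, which gives \eqref{gj-6} with $\alpha$ essentially $2$ (or any $\alpha$ large enough to kill the negative powers of $\eta$ that survive the absorption). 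The main obstacle is the bookkeeping in the third step: controlling the bad third-order terms $\nabla_i h_{11}$ in the directions where $F^{ii}$ is not comparable to $F^{11}$, which is exactly where one must use the subsolution barrier $\tilde\varphi$ and the structural inequalities \eqref{cj-6}, \eqref{cj-9} for $f$ in a coordinated way, as in \cite{Guan14, Urbas02}.
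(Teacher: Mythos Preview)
Your maximum-principle strategy is in the right spirit, but two concrete gaps prevent the argument from closing.

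First, you misidentify the Minkowski difficulty: the term $h_{11}\tilde F^{ii}h_{ii}^2$ coming from the commutation formula (see \eqref{GLL-1}--\eqref{gj-11}) carries the \emph{good} sign; the genuinely bad contribution is $-\psi^{1/n}h_{11}^2$, which after dividing by $h_{11}$ is only linear in $h_{11}$. More seriously, the dominant positive term $c_0\,h_{11}\sum_i F^{ii}$ you write in your final inequality does not arise from any of the pieces you assemble. The quantities $L(\log h_{11})$, $L(|\nabla u|^2)$ and $L(\tilde w)$ each contribute multiples of $\sum_i F^{ii}h_{ii}^2$, which is \emph{not} comparable to $h_{11}\sum_i F^{ii}$ when the eigenvalues $h_{ii}$ for $i\ge 2$ are small relative to $h_{11}$. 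The paper closes exactly this gap by a case split that exploits the specific structure of $K_\eta$: if $|h_{ii}|\le\epsilon_0 h_{11}$ for all $i\ge 2$ (Case~1), one computes directly that $\sum_i\tilde F^{ii}=\tfrac{n-1}{n}\psi^{1/n-1}\sigma_{n-1}(\eta)\ge\delta_1 h_{11}$, and then the auxiliary term $\tfrac{b}{2}|X|^2$ (not a gradient function) supplies $b\sum\tilde F^{ii}\ge b\delta_1 h_{11}$; otherwise (Case~2) some $\tilde F^{i_0i_0}h_{i_0i_0}^2\ge\epsilon_0^2\delta_2 h_{11}^2$ dominates. Your appeal to the generic properties \eqref{cj-6}, \eqref{cj-9} and to $\sum_i F^{ii}\ge c(n)$ does not substitute for this structural argument.

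A secondary but related issue: the paper's test function carries weight $\zeta^\alpha$ with $\alpha$ \emph{large}, precisely so that the third-order absorption via the Andrews--Gerhardt inequality (see \eqref{gj-S130}--\eqref{gj-13}) produces a coefficient $\tfrac{2}{1+s}-1-\tfrac{2}{\alpha}>0$ in front of the good concavity term. With your coefficient $1$ on $\log\eta$ this inequality fails, and your alternative auxiliary terms $|\nabla u|^2$, $\tilde w$ do not obviously compensate: substituting the critical-point equation generates terms scaling like $a^2 F^{ii}h_{ii}^2$, which compete unfavorably with the $aF^{ii}h_{ii}^2$ good term when $a$ is large. Your concluding claim that the argument yields ``$\alpha$ essentially $2$'' is also inconsistent with the test function you chose, which (if the argument closed) would give $\alpha=1$.
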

\begin{proof}
Let
\[
W (X, \xi) = \zeta^\alpha e^{\frac{b}{2} |X|^2} h_{\xi \xi}
\]
for $X \in M_u$ and unit $\xi \in T_X M_u$, where $\zeta := \tilde{\varphi} - u$ and $b$ is a positive constant
to be chosen. Suppose the maximum of $W$ is
achieved at $X_0 = (x_0, u (x_0))\in M_u$ and $\xi_0 \in T_{X_0} M_u$.
We choose a  local orthonormal frame $\{e_{1},e_{2},\cdots,e_{n}\}$ about $X_{0}$ such that,
\[
\xi_0 = e_1,\ \nabla_{e_i} e_j = 0 \mbox{ at } X_0.
\]
We may also assume that $\{h_{ij}\}$ is diagonal at $X_0$ and furthermore,
\[
h_{11} \geq \cdots \geq h_{nn}.
\]
Let $\eta_{ij} = H\delta_{ij} - h_{ij}$. We find $\{\eta_{ij}\}$ is also diagonal at $X_0$ and
\[
\eta_{11} \geq \cdots \geq \eta_{nn}.
\]
In the rest of proof, the calculations are all carried out at $X_0$. First we note that the function (defined near $X_0$)
\[
\alpha \log \zeta + \log h_{11} + \frac{b}{2} |X|^2
\]
achieves its maximum at $X_0$ and therefore,
\begin{equation}
\label{gj-7}
  \alpha \frac{\nabla_i \zeta}{\zeta} + b \langle X, e_i \rangle + \frac{\nabla_i h_{11}}{h_{11}} = 0,
  i = 1, \ldots, n
\end{equation}
and
\begin{equation}
\label{gj-8}
0 \geq \alpha\frac{\nabla_{ii} \zeta}{\zeta} - \alpha\left(\frac{\nabla_i \zeta}{\zeta}\right)^2
    + b \big(1 + h_{ii}\langle X, \nu\rangle\big) + \frac{\nabla_{ii} h_{11}}{h_{11}} - \left(\frac{\nabla_i h_{11}}{h_{11}}\right)^2.
\end{equation}
Next, we have
\begin{equation}
\label{gj-9}
\nabla_{ij} \tilde{\varphi} = D^2 \tilde{\varphi} (e_i, e_j) + \sum_{k=1}^n \nu_k \tilde{\varphi}_k h_{ij}.
\end{equation}
Since $\tilde{\varphi}$ is convex, we have
\[
\tilde{F}^{ij} \nabla_{ij} \tilde{\varphi} \geq \sum_{k=1}^n \nu_k \tilde{\varphi}_k \tilde{F}^{ij} h_{ij}
   = \sum_{k=1}^n \nu_k \tilde{\varphi}_k \psi^{1/n},
\]
where
\[
\tilde{F}^{ij} := \frac{\partial f^{1/n} (\lambda (h))}{\partial h_{ij}}.
\]
It follows that
\begin{equation}
\label{gj-10}
\tilde{F}^{ii} \nabla_{ii} \zeta = \tilde{F}^{ii} \nabla_{ii} (\tilde{\varphi} - u) \geq \left(\sum_{k=1}^n \nu_k \tilde{\varphi}_k - \nu_{n+1}\right) \tilde{F}^{ii} h_{ii} \geq - C.
\end{equation}
Using standard formulae and differentiating the equation \eqref{jg--1} twice, we have
\begin{equation}
	\label{GLL-1}
	\begin{aligned}
		\tilde{F}^{ij}\nabla_{ij} h_{ab}=\,&-\tilde{F}^{ij,kl}\nabla_{a}h_{ij}\nabla_{b}h_{kl}-\tilde{F}^{ij}h_{ij}h_{ak}h_{bk}\\
		\,&+\tilde{F}^{ij}h_{ik}h_{jk}h_{ab}+\nabla_{ab}(\psi^{1/n}),
	\end{aligned}
\end{equation}
where
\[
\tilde{F}^{ij,kl} := \frac{\partial^2 f^{1/n} (\lambda (h))}{\partial h_{ij}\partial h_{kl}}.
\]
(The reader is referred to \cite{Urbas 03} for a proof of \eqref{GLL-1}.) It follows that
\begin{equation}
\label{gj-11}
\tilde{F}^{ii} \nabla_{ii}h_{11} = -\tilde{F}^{ij,kl}\nabla_{1}h_{ij}\nabla_{1}h_{kl} - (\psi^{1/n}) h_{11}^2 + h_{11} \tilde{F}^{ii} h_{ii}^2 + \nabla_{11} (\psi^{1/n})
\end{equation}
By \eqref{gj-8}, \eqref{gj-10} and \eqref{gj-11}, we obtain
\begin{equation}
\label{gj-12}
0 \geq - \frac{C\alpha}{\zeta} + b \sum \tilde{F}^{ii} + \tilde{F}^{ii} h_{ii}^2 - C h_{11} + E
\end{equation}
provided $h_{11}$ is sufficiently large, where
\[
E := -\frac{\tilde{F}^{ij,kl}\nabla_{1}h_{ij}\nabla_{1}h_{kl}}{h_{11}} - \alpha \tilde{F}^{ii}\left(\frac{\nabla_i \zeta}{\zeta}\right)^2 - \tilde{F}^{ii} \left(\frac{\nabla_i h_{11}}{h_{11}}\right)^2.
\]
To estimate $E$ we use the following lemma proved by Andrews \cite{Andrews94} and Gerhardt \cite{Gerhardt96} as
in \cite{Guan14}.
\begin{lemma}
\label{AG}
For any symmetric matrix $A = \{a_{ij}\}$, we have
\begin{equation}
\label{AG-1}
\tilde{F}^{ij, kl} a_{ij} a_{kl} = \sum_{i,j} \frac{\partial^2 f^{1/n}}{\partial \kappa_i \partial \kappa_j} a_{ii} a_{jj}
    + \sum_{i\neq j} \frac{(f^{1/n})_i - (f^{1/n})_j}{\kappa_i - \kappa_j} a_{ij}^2.
\end{equation}
\end{lemma}
Set
\[  \begin{aligned}
J \,& = \{i: h_{ii} \leq - s h_{11}\}, \;\;
K = \{i:  h_{ii} > - s h_{11} \}.
  \end{aligned} \]
for fixed $0 < s \leq 1/3$.
We have, by \eqref{AG-1} and the Codazzi equation,
\begin{equation}
\label{gj-S130}
\begin{aligned}
 - \tilde{F}^{ij, kl} \nabla_1 h_{ij} \nabla_1 h_{kl}
\geq \,& \sum_{i \neq j} \frac{\tilde{F}^{ii} - \tilde{F}^{jj}}{h_{jj} - h_{ii}}
           (\nabla_1 h_{ij})^2 \\
 \geq \,& 2 \sum_{i \geq 2} \frac{\tilde{F}^{ii} - \tilde{F}^{11}}{h_{11} - h_{ii}}
            (\nabla_1 h_{i1})^2 \\
 \geq \,& \frac{2}{(1+s) h_{11}} \sum_{i \in K} (\tilde{F}^{ii} - \tilde{F}^{11})
            (\nabla_i h_{11})^2.
\end{aligned}
\end{equation}
By \eqref{gj-7} and \eqref{gj-S130}, we have
\begin{equation}
\label{gj-13}
\begin{aligned}
E \geq \,& \sum_{i \in K}  \tilde{F}^{ii} \left\{\left(\frac{2}{1+s} - 1 - \frac{2}{\alpha}\right) \left(\frac{\nabla_i h_{11}}{h_{11}}\right)^2 + \frac{Cb^2}{\alpha}\right\}\\
  & - C \sum_{i \in J \cup \{1\}} \tilde{F}^{ii} \left\{\alpha^2 \left(\frac{\nabla_i \zeta}{\zeta}\right)^2 + b^2\right\}
\end{aligned}
\end{equation}
Assume that $\alpha$ is sufficiently large such that $\frac{2}{1+s} - 1 - \frac{2}{\alpha} > 0$. Combining \eqref{gj-12} and \eqref{gj-13}
we get
\[
0 \geq \left(b-\frac{Cb^2}{\alpha}\right) \sum \tilde{F}^{ii} + \frac{1}{2} \tilde{F}^{ii} h_{ii}^2 - \frac{C\alpha}{\zeta} - Ch_{11}
\]
provided
\[
s^2 h_{11}^2 \zeta^2 \geq C (\alpha^2 + b^2).
\]
We may further assume that $\alpha \gg b$ such that $\frac{Cb^2}{\alpha} < \frac{b}{2}$. Thus, we obtain
\begin{equation}
\label{gj-14}
0 \geq \frac{b}{2} \sum \tilde{F}^{ii} + \frac{1}{2} \tilde{F}^{ii} h_{ii}^2 - \frac{C\alpha}{\zeta} - Ch_{11}.
\end{equation}
As in \cite{CJ20}, we consider two cases, where $\epsilon_0$ is a small positive constant to be determined later.

{\bf Case 1.} \ $|h_{ii}| \leq \epsilon_0 h_{11}$ for all $i\geq2$.

Note that
\[
\eta_{ii}=\sum_{k\neq i}h_{kk}
\]
and
\[
\eta_{11}\leq \cdots \leq \eta_{nn}.
\]
Then we have
\[
[1-(n-2) \epsilon_0]h_{11} \leq \sum_{j\neq2}h_{jj} = \eta_{22}
\leq \cdots \leq \eta_{nn} \leq [1+(n-2) \epsilon_0]h_{11}.
\]
It follows that
\[
\sigma_{n-1}(\eta) \geq \eta_{22} \cdots \eta_{nn}
\geq (1- (n-1) \epsilon_0)^{n-1}h_{11}^{n-1}.
\]
Choosing $\epsilon_0$ sufficiently small we get
\[
\sigma_{n-1}(\eta) \geq \frac{h_{11}^{n-1}}{2} \geq \frac{h_{11}}{2}.
\]
We then obtain
\begin{equation}\label{Case 1 eqn 2}
\sum_{i}\tilde{F}^{ii} = \frac{n-1}{n} \psi^{\frac{1}{n}-1} \sigma_{n-1}(\eta) \geq \delta_1 h_{11}
\end{equation}
for some positive constant $\delta_1$ depending only on $n$ and $\sup \psi$.
Thus, by \eqref{gj-14} and \eqref{Case 1 eqn 2}, we have
\[
0 \geq \frac{b \delta_1}{2} h_{11} - \frac{C\alpha}{\zeta} - Ch_{11}.
\]
Fixing $b$ sufficiently large, we obtain
\[
h_{11} \zeta \leq \frac{C \alpha}{b \delta_1/2 - C}
\]
and \eqref{gj-6} is proved.

{\bf Case 2.} \ $h_{22}>\epsilon_0 h_{11}$ or $h_{nn}<-\epsilon_0 h_{11}$ .

Let
\[
\hat{F}^{ii} := \frac{\partial f^{1/n} (\lambda (h))}{\partial\eta_{ij}}.
\]
We have
\begin{equation}\label{Case 2 eqn 1}
\tilde{F}^{22} = \sum_{i\neq 2}\hat{F}^{ii} \geq \frac{1}{2}\sum_{i}\hat{F}^{ii} =
 \frac{1}{2n} \sigma_n^{1/n-1} (\eta) \sigma_{n-1}(\eta) \geq \delta_2
\end{equation}
for some positive constant $\delta_2$ depending only on $n$. Similarly, we have
\[
\tilde{F}^{nn} \geq \delta_2.
\]
Thus, by \eqref{gj-14} we have
\[
0 \geq \epsilon_0^2 \delta_2 h_{11}^2 - \frac{C\alpha}{\zeta} - Ch_{11} \geq \frac{\epsilon_0^2 \delta_2}{2} h_{11}^2 - \frac{C\alpha}{\zeta}
\]
provided $h_{11}$ is sufficiently large.
Therefore,
\[
h_{11}^2 \zeta \leq \frac{2C\alpha}{\epsilon_0^2 \delta_2}
\]
and \eqref{gj-6} follows immediately.
\end{proof}
Similarly, we can prove the following global estimates.
\begin{theorem}
\label{Thm-second-interior}
Let $u \in C^4 (\Omega) \cap C^2 (\overline{\Omega})$ be an admissible solution of \eqref{jg--1}.
Then there exists a positive constant $C$ depending on $n$, $\theta_0$
and
$\|\psi\|_{C^{2} (\overline{\Omega} \times [- \mu_0, \mu_0])}$
satisfying
\begin{equation}
\label{S-1}
\sup_{\ol \Omega} |D^2 u| \leq C \Big(1 + \sup_{\partial \Omega} |D^2 u|\Big),
\end{equation}
where $\mu_0 := \|u\|_{C^0 (\overline{\Omega})}$.
\end{theorem}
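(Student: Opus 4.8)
\section*{Proof proposal for Theorem~\ref{Thm-second-interior}}

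The plan is to repeat the argument of Theorem~\ref{gj-thm1}, replacing the degenerate cutoff $\zeta^{\alpha}$ by the bounded weight $e^{\frac b2|X|^2}$: if the associated test function attains its maximum on $\partial\Omega$ one reads off \eqref{S-1} at once, and if it attains its maximum in the interior, the same curvature and concavity estimates already used force the largest principal curvature to be bounded by a constant depending only on the allowed data.

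Concretely, for $X\in\ol{M_u}$ and unit $\xi\in T_XM_u$ I would set $W(X,\xi)=e^{\frac b2|X|^2}h_{\xi\xi}$ with $b>0$ large, to be fixed later; since $\ol\Omega$ is bounded and $\|u\|_{C^0(\ol\Omega)}=\mu_0$, the weight is bounded above and below by positive constants. Let $W$ attain its maximum over the unit tangent bundle of $\ol{M_u}$ at $(X_0,\xi_0)$, $X_0=(x_0,u(x_0))$; then $h_{\xi_0\xi_0}(X_0)$ equals the largest principal curvature, which I denote $h_{11}(X_0)$. Because the gradient bound \eqref{spacelike} controls $w$ and $\gamma^{ij}$ uniformly in \eqref{matrix}, and because $\kappa\in\Gamma$ forces $\kappa_1>0$ and $|\kappa_j|\le(n-1)\kappa_1$ for all $j$, one has the two--sided comparison $c_1 h_{11}\le|D^2u|\le c_2 h_{11}$ at every point, with $c_1,c_2$ depending only on $n,\theta_0$. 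Hence if $x_0\in\partial\Omega$ then $W\le W(X_0)\le C\bigl(1+\sup_{\partial\Omega}|D^2u|\bigr)$ on $\ol{M_u}$, and the comparison yields \eqref{S-1}. So it remains to treat $x_0\in\Omega$, and there I may assume $h_{11}(X_0)$ exceeds any prescribed threshold, since otherwise $W$, and hence $|D^2u|$, is bounded everywhere.

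At the interior maximum I would choose the frame exactly as in Theorem~\ref{gj-thm1} ($\xi_0=e_1$, $\nabla_{e_i}e_j=0$, $\{h_{ij}\}$ diagonal with $h_{11}\ge\cdots\ge h_{nn}$, so that $\{\eta_{ij}\}$ and $\{\tilde F^{ij}\}$ are diagonal), use $\nabla_i\log W=0$ to get $\tfrac{\nabla_ih_{11}}{h_{11}}=-b\langle X,e_i\rangle$, and contract $0\ge\nabla_{ii}\log W$ with $\tilde F^{ii}$; inserting \eqref{gj-11} and $\nabla_{ii}(\tfrac b2|X|^2)=b(1+h_{ii}\langle X,\nu\rangle)$ as in \eqref{gj-8} yields
\[
0\ge E+\tilde F^{ii}h_{ii}^2-\psi^{1/n}h_{11}+\frac{\nabla_{11}\psi^{1/n}}{h_{11}}+b\sum_i\tilde F^{ii}+b\langle X,\nu\rangle\psi^{1/n},\qquad
E:=-\frac{\tilde F^{ij,kl}\nabla_1h_{ij}\nabla_1h_{kl}}{h_{11}}-\tilde F^{ii}\Bigl(\frac{\nabla_ih_{11}}{h_{11}}\Bigr)^2 .
\]
By $\nabla_{11}u=\tilde wh_{11}$ (see \eqref{gj-4}) and $\psi_z\ge0$ one has $\tfrac{\nabla_{11}\psi^{1/n}}{h_{11}}\ge-C$, and all remaining elementary terms are $\ge-Ch_{11}-C(1+b)$. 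The one genuinely delicate point --- as in Theorem~\ref{gj-thm1}, and the reason the Introduction invokes \cite{Guan14} and \cite{Urbas02} --- is the estimate of the bad third order term $E$; I would carry it out by the same computation as in \eqref{gj-S130}, using Lemma~\ref{AG}, the Codazzi equation and the concavity \eqref{cj-9} of $f^{1/n}$, to obtain $E\ge-Cb^2\tilde F^{11}-Cb^2\sum_{i\in J}\tilde F^{ii}$ with $J=\{i:h_{ii}\le-sh_{11}\}$ for a fixed $s\le\tfrac13$; these terms are then absorbed into $\tilde F^{ii}h_{ii}^2$ (favorable here precisely because of the Minkowski sign in \eqref{GLL-1}), using $h_{ii}^2\ge s^2h_{11}^2$ on $J$ together with $h_{11}$ large. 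This leaves the analogue of \eqref{gj-14},
\[
0\ge\tfrac12\sum_i\tilde F^{ii}h_{ii}^2+b\sum_i\tilde F^{ii}-Ch_{11}-C(1+b).
\]

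Finally I would split into the two cases of Theorem~\ref{gj-thm1}, with a small dimensional $\varepsilon_0>s$. If $|h_{ii}|\le\varepsilon_0h_{11}$ for all $i\ge2$, then $\eta_{22},\dots,\eta_{nn}\ge(1-(n-1)\varepsilon_0)h_{11}$, so $\sum_i\tilde F^{ii}=\tfrac{n-1}{n}\psi^{1/n-1}\sigma_{n-1}(\eta)\ge\delta_1h_{11}$ as in \eqref{Case 1 eqn 2}; dropping the nonnegative curvature term gives $(b\delta_1-C)h_{11}\le C(1+b)$, which for $b$ fixed large bounds $h_{11}(X_0)$. Otherwise $h_{22}>\varepsilon_0h_{11}$ or $h_{nn}<-\varepsilon_0h_{11}$, and by \eqref{Case 2 eqn 1} (which uses only the ordering $\eta_{11}\le\cdots\le\eta_{nn}$ and Maclaurin's inequality, hence is purely dimensional) the corresponding $\tilde F^{22}$ or $\tilde F^{nn}$ is $\ge\delta_2$, so $\sum_i\tilde F^{ii}h_{ii}^2\ge\delta_2\varepsilon_0^2h_{11}^2$ and the displayed inequality forces $h_{11}(X_0)\le C$. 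In either case $W(X_0)\le C$, so $W\le C$ on $\ol{M_u}$ and $\sup_{\ol\Omega}|D^2u|\le C$; together with the boundary case this proves \eqref{S-1}. The main obstacle, exactly as in Theorem~\ref{gj-thm1}, is the estimate of $E$ and the careful order in which $s$, $\varepsilon_0$ and $b$ must be chosen so that every bad term is absorbed into $\tilde F^{ii}h_{ii}^2$ and $\sum_i\tilde F^{ii}$.
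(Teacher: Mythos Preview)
Your proposal is correct and is exactly the argument the paper has in mind: the paper's entire proof of Theorem~\ref{Thm-second-interior} is the single sentence ``Similarly, we can prove the following global estimates,'' meaning one reruns the proof of Theorem~\ref{gj-thm1} with the cutoff $\zeta^\alpha$ removed, and you have accurately spelled out those details (including the simplification that $|\nabla_i h_{11}/h_{11}|\le Cb$ now holds directly from the first-order condition, so the third-order term $E$ is in fact easier to absorb than in the Pogorelov case).
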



\section{Boundary estimates for second order derivatives}
In this section, we establish the boundary estimates for second order derivatives.
\begin{theorem}
\label{thm-boundary}
Suppose $\Omega$ is a bounded domain in $\mathbb{R}^n$ with smooth strictly convex boundary $\partial \Omega$.
Let $u \in C^3 (\overline{\Omega})$ be an admissible solution of \eqref{jg--1}. Then there
exists a positive constant $C$ depending only on $\theta_0$,
$\|\psi\|_{C^{1} (\overline{\Omega} \times [-\mu_0, \mu_0]) }$, $\|\varphi\|_{C^3 (\overline{\Omega})}$ and $\partial \Omega$
satisfying
\begin{equation}
\label{B2-0}
\max_{\partial \Omega} |D^2 u| \leq C,
\end{equation}
where $\mu_0 := \|u\|_{C^0 (\overline{\Omega})}$.
\end{theorem}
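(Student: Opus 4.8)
The plan is to follow the classical Caffarelli--Nirenberg--Spruck scheme for boundary $C^2$ estimates, adapted to the Minkowski setting and to the operator $f(\kappa)$. Fix $x_0 \in \partial\Omega$ and choose coordinates so that $x_0 = 0$, the inner normal is $e_n$, and the $x_1,\dots,x_{n-1}$ directions are principal directions of $\partial\Omega$. Write $u = \varphi$ on $\partial\Omega$ and set $v := u - \varphi$ (extending $\varphi$ smoothly inside), so $v = 0$ on $\partial\Omega$. The three types of second derivatives to control are: (i) the pure tangential ones $D_{\alpha\beta}u$, $\alpha,\beta \le n-1$; (ii) the mixed ones $D_{\alpha n}u$, $\alpha \le n-1$; and (iii) the double normal $D_{nn}u$. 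Type (i) is immediate from $v=0$ on $\partial\Omega$, the $C^3$ bound on $\varphi$, the strict convexity of $\partial\Omega$, and the $C^1$ bound $|Du|\le 1-\theta_0$ already established: differentiating $v=0$ twice along $\partial\Omega$ gives $D_{\alpha\beta}u = D_{\alpha\beta}\varphi - (D_n u - D_n\varphi)\kappa^b_{\alpha}\delta_{\alpha\beta}$, hence $|D_{\alpha\beta}u| \le C$.

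For type (ii) I would use a barrier argument. Consider the linearized operator $L := \tilde F^{ij}\nabla_{ij}$ (or, equivalently, $F^{ij}D_{ij}$ in Euclidean coordinates after accounting for the spacelike factors), which is elliptic since $f_i>0$ by \eqref{cj-10}; note $f^{1/n}$ is concave by \eqref{cj-9}, which is what makes the operator behave well under the barrier. Apply $L$ to the tangential-derivative-type quantity $T v$, where $T = \partial_\alpha + \sum_{\beta<n}\kappa^b_\beta x_\beta\,\partial_n$ is the standard tangential operator that is well-defined near $x_0$; because $Tu$ is controlled by $\varphi$ on $\partial\Omega$ modulo the convexity terms, and $L(Tu)$ is controlled by $D\psi$ (differentiating \eqref{jg--1} once), one gets $|L(\pm Tv)| \le C(1 + \sum \tilde F^{ii})$. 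Then build a barrier of the form $\Psi = A v + B|x|^2 - $ (a term forcing the right sign), using that $-Lv \ge$ a positive multiple of $\sum\tilde F^{ii}$ near the boundary — here the admissibility of $\Omega$ and the existence of the subsolution $\underline u$ enter, exactly as in the $C^1$ estimate and as in \cite{JaoSun22}. The maximum principle on a half-ball $\Omega\cap B_\rho$ then yields $|D_{\alpha n}u(x_0)| \le C$.

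The double normal estimate (iii) is the main obstacle, and this is where the authors signal they need ideas from \cite{Ivochkina91} and \cite{ILT96} because $\varphi$ is non-constant. The strategy is: having bounded all $D_{\alpha\beta}u$ and $D_{\alpha n}u$ on $\partial\Omega$, one shows $D_{nn}u(x_0) \le C$ — a lower bound on $D_{nn}u$ being automatic from admissibility (the tangential curvatures already being bounded forces $\lambda_n>0$, hence $D_{nn}u$ bounded below). For the upper bound, use the structural identity \eqref{jg-4}: $K_\eta(M_u)$ is, up to the factor $w^{-n}$, a polynomial in $\tilde\nabla_{nn}u$ of degree $n-1$ with leading coefficient $\big(\tfrac{1-|\tilde\nabla u(n)|^2}{1-|\tilde\nabla u|^2}\big)^{n-1}S_{1;n}(\tilde\nabla^2 u,\tilde\nabla u)$ and lower coefficients $P_i$ depending only on the already-bounded derivatives $\tilde\nabla_{\alpha\beta}u$ ($\alpha+\beta<2n$). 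If $D_{nn}u(x_0)$ were large, the equation $K_\eta = \psi$ would force the leading coefficient — essentially $S_{1;n}$, i.e. $\sigma_1$ of the tangential second fundamental form — to be very small; following \cite{ILT96} one shows this cannot happen by comparing with the subsolution $\underline u$ (or by a tangential eigenvalue/compactness argument at $x_0$ using that $\partial\Omega$ is strictly convex and all tangential data are controlled), deriving a contradiction. Concretely I would argue: suppose $D_{nn}u(x_j)\to\infty$ along a sequence $x_j\to x_0\in\partial\Omega$; pass to a limiting tangential Hessian, use the uniform bounds on the remaining entries and the uniform lower bound $\psi\ge\inf\psi>0$, and extract from \eqref{jg-4} that some $\lambda_i\to 0$, contradicting the $(\eta,n)$-convexity which is preserved with a uniform margin near the boundary thanks to the admissible subsolution. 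Combining (i)--(iii) gives \eqref{B2-0}.
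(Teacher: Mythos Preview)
Your treatment of (i) matches the paper, and your plan for (ii) is in the right spirit, but it glosses over a Minkowski-specific difficulty: differentiating the equation produces gradient terms $G^s\,D_s(\cdot)$ with $|G^s|\le C\sum_i f_i|\kappa_i|$ (Lemma~\ref{lemGS2}), and $\sum_i f_i|\kappa_i|$ is \emph{not} controlled by $\sum_i G^{ii}$ alone. The paper therefore does not apply $L$ to $Tv$ directly; it uses Ivochkina's corrected test function
\[
W=\nabla'_\alpha(u-\varphi)-\tfrac12\sum_{\beta<n}(u_\beta-\varphi_\beta)^2,
\]
whose quadratic tail generates the good term $-K\sum_\beta G^{ij}u_{\beta i}u_{\beta j}\le -cK\sum_i f_i\kappa_i^2$ needed to absorb $\sum_i f_i|\kappa_i|$, together with the exponential transform $\tilde W=1-e^{-bW}$ to kill the $G^{ij}W_iW_j$ remainder. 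Your barrier $A v+B|x|^2$ would not close without this.

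The genuine gap is in (iii). Neither of the mechanisms you propose works. The subsolution comparison goes the \emph{wrong way}: from $\underline u\le u$ with equality on $\partial\Omega$ one gets $u_n\ge\underline u_n$, hence via \eqref{cor-2} the tangential Hessians satisfy $(D^2u)'\le(D^2\underline u)'$, which gives an \emph{upper} bound on $S_{1;n}(D^2u,Du)$, not the lower bound you need. The compactness argument is circular: saying ``$(\eta,n)$-convexity is preserved with a uniform margin'' presupposes a uniform positive lower bound on $\lambda_n=\sum_{\alpha<n}\kappa_\alpha$, which is exactly the content of \eqref{normal} and is not available a priori. What the paper (following \cite{Ivochkina91,ILT96}) actually does is: let $m=\min_{\partial\Omega}S_{1;n}$ be attained at $x_0$; build a \emph{second} barrier at $x_0$ from the function
\[
V=-A(x)\,\tilde\nabla_n(u-\varphi)-m+\Big(\delta_{\alpha\beta}+\tfrac{\tilde\nabla_\alpha\varphi\,\tilde\nabla_\beta\varphi}{1-|\tilde\nabla'\varphi|^2}\Big)\tilde\nabla_{\alpha\beta}\varphi-\tfrac{K}{2}\sum_{\beta<n}(u-\varphi)_\beta^2,
\]
which is $\ge 0$ on $\partial\Omega$ near $x_0$ by the minimality of $m$, prove the differential inequality of Lemma~\ref{gj-lem2} for $V$ (again the quadratic tail is essential to absorb $\sum f_i|\kappa_i|$), and conclude $u_{nn}(x_0)\le C$. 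With all second derivatives bounded at this single point, $\kappa[M_u](x_0)$ lies in a fixed compact $S\Subset\Gamma$, and then $m=S_{1;n}(x_0)\ge c_0(S,\theta_0)>0$ follows from \eqref{ILT} and uniform ellipticity on $S$. This bootstrap---bounding $u_{nn}$ only at the minimizing point, then using that to force $m>0$ everywhere---is the key idea you are missing.
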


For any point $x_0 \in \partial \Omega$, without loss of generality, we may assume that $x_0$ is the origin and that the
positive $x_n$-axis is the inner normal direction to $\partial \Omega$ at the origin.
Furthermore, we may suppose that in a neighbourhood of the origin, the boundary $\partial \Omega$ is given by
\begin{equation}
\label{BC2-1}
x_n = \rho (x') = \frac{1}{2} \sum_{\alpha < n} \kappa^b_\alpha x_\alpha^2 + O (|x'|^3),
\end{equation}
where $\kappa^b_1, \ldots, \kappa^b_{n-1}$ are the principal curvatures of $\partial \Omega$ at the origin and $x' = (x_1, \ldots, x_{n-1})$.
Since $u = \varphi$ on $\partial \Omega$, we have
\begin{equation}
\label{BC2-2}
|u_{\alpha \beta} (0)| \leq C \mbox{  for } 1\leq \alpha, \beta \leq n - 1,
\end{equation}
where constant $C$ depending on $\|\varphi\|_{C^2 (\ol \Omega)}$.

We rewrite the equation \eqref{jg--1} by the form
\begin{equation}
\label{1-1-1}
G (D^2 u, Du) := f (\lambda (A[u])) = \psi (x, u),
\end{equation}
where $G = G (r, p)$ is viewed as a function of $(r, p)$ for $r \in S^{n \times n}$ and $p \in \mathbb{R}^n$.
Define
\begin{equation}
\label{BC2-25}
G^{ij} = \frac{\partial G}{\partial r_{ij}} (D^2 u, D u),\ \ G^{i} = \frac{\partial G}{\partial p_i} (D^2 u, D u)
\end{equation}
and the linearized operator by
\[
L = G^{ij} \partial_{ij}.
\]
Similar to lemma 2.3 of \cite{GS04}, we have the following lemma.
\begin{lemma} We have
\label{lemGS2}
\begin{equation}
\label{GS-2}
G^s = \frac{u_s}{w^2} \sum_i f_i \kappa_i + \frac{2}{w (1+w)} \sum_{t,j}F^{ij} a_{it} \big(w u_t \gamma^{sj}
   + u_j \gamma^{ts}\big),
\end{equation}
where $w = \sqrt{1 - |Du|^2}$, $a_{ij} =\frac{1}{w}\gamma^{ik}u_{kl}\gamma^{lj}$, $\kappa = \lambda (\{a_{ij}\})$, $f_i = \frac{\partial f (\kappa)}{\kappa_i}$ and
\[
F^{ij} = \frac{\partial f (\lambda (A[u]))}{\partial a_{ij}}.
\]
\end{lemma}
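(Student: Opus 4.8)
The plan is to compute $G^s = \partial G/\partial p_s$ by the chain rule, tracking how the three ingredients of $G(r,p) = f(\lambda(A[u]))$ depend on $p = Du$. Recall from \eqref{matrix} that $a_{ij} = \frac1w \gamma^{ik} u_{kl} \gamma^{lj}$ with $\gamma^{ik} = \delta_{ik} + \frac{u_i u_k}{w(1+w)}$ and $w = \sqrt{1-|Du|^2}$, so $p$ enters $A[u]$ through (i) the explicit factor $1/w$, (ii) the two $\gamma$-factors, and not through $u_{kl}$ (which is part of $r$). Writing $F^{ij} = \partial f(\lambda(A[u]))/\partial a_{ij}$, the chain rule gives
\[
G^s = F^{ij} \frac{\partial a_{ij}}{\partial p_s}
    = F^{ij}\Big( \frac{\partial (1/w)}{\partial p_s}\, w a_{ij}
      + \frac{1}{w}\frac{\partial \gamma^{ik}}{\partial p_s} u_{kl}\gamma^{lj}
      + \frac{1}{w}\gamma^{ik} u_{kl}\frac{\partial \gamma^{lj}}{\partial p_s}\Big).
\]
Here I would use $\partial w/\partial p_s = -u_s/w$, so $\partial(1/w)/\partial p_s = u_s/w^3$, and the first term becomes $\frac{u_s}{w^2} F^{ij} a_{ij} = \frac{u_s}{w^2}\sum_i f_i \kappa_i$, using the identity $F^{ij} a_{ij} = \sum_i f_i \kappa_i$ (contract the derivative of $f$ against its argument). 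That produces the first term on the right-hand side of \eqref{GS-2}.

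For the remaining two terms I would compute $\partial \gamma^{ik}/\partial p_s$ directly from $\gamma^{ik} = \delta_{ik} + \frac{u_i u_k}{w(1+w)}$. Differentiating the numerator gives $\delta_{is} u_k + u_i \delta_{ks}$ over $w(1+w)$; differentiating the denominator $w(1+w)$ with respect to $p_s$ gives $(1+2w)\cdot(-u_s/w)$, contributing a term proportional to $u_i u_k u_s$. The cleanest route is to recognize the combination $\frac{1}{w}\gamma^{ik} u_{kl}\gamma^{lj}$ already equals $a_{ij}$, and to write the two $\gamma$-derivative terms as $\frac{1}{w}(\partial_s\gamma^{ik})\gamma^{-1}_{km} a_{mj} \cdot(\text{factor})$ — more precisely, since $u_{kl} = w\gamma_{km} a_{mp}\gamma_{pl}$ (inverting \eqref{matrix}), the term $\frac1w (\partial_s\gamma^{ik})u_{kl}\gamma^{lj}$ collapses to $(\partial_s\gamma^{ik})\gamma_{km} a_{mj}$, and similarly for the other factor by symmetry. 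Substituting $\partial_s \gamma^{ik}$ and simplifying the contraction $(\partial_s\gamma^{ik})\gamma_{km}$ should yield exactly the stated expression $\frac{2}{w(1+w)}\sum_{t,j} F^{ij} a_{it}(w u_t \gamma^{sj} + u_j \gamma^{ts})$ after symmetrizing in $i \leftrightarrow j$ (legitimate since $F^{ij}$ is symmetric).

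The main obstacle I anticipate is purely bookkeeping: carefully simplifying the contraction $(\partial_s \gamma^{ik})\gamma_{km}$, where the $u_i u_k u_s / (w(1+w))$-type terms from differentiating the denominator must be reorganized using the algebraic relations between $\gamma^{ij}$, $\gamma_{ij}$, $w$ and $|Du|^2$ (for instance $\gamma^{ik}\gamma_{kj} = \delta_{ij}$ and $u_k \gamma^{ik} = u_i/w$, which follows from $\gamma_{ij} = \delta_{ij} - \frac{u_iu_j}{1+w}$). One must check that the two contributions — one from differentiating the product $u_i u_k$ in the numerator, one from differentiating $w(1+w)$ in the denominator — combine to give precisely the coefficient $2/(w(1+w))$ and the factor $(w u_t \gamma^{sj} + u_j \gamma^{ts})$ rather than some other grouping; the symmetrization in $i$ and $j$ is what makes the "$2$" appear. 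Since this is the Minkowski analogue of Lemma 2.3 in \cite{GS04}, I would follow that computation line by line, substituting the Lorentzian $\gamma^{ik}$ (with the $+u_iu_k/(w(1+w))$ sign) in place of the Euclidean one, and the identity $F^{ij}a_{ij} = \sum f_i\kappa_i$ in place of its Euclidean counterpart; no new conceptual input is needed beyond the correct sign conventions for $w$ and its derivatives.
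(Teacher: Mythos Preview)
Your proposal is correct and follows essentially the same route as the paper: chain rule through $a_{ij}=\frac1w\gamma^{ik}u_{kl}\gamma^{lj}$, the $1/w$-derivative giving the $\frac{u_s}{w^2}\sum f_i\kappa_i$ term, and the two $\gamma$-derivatives combined via the symmetry of $F^{ij}$ to produce the factor~$2$. The only streamlining the paper adds is that, after writing $\gamma^{ik}u_{kl}=w\,a_{it}\gamma_{tl}$, it converts $\gamma_{tl}\,\partial_s\gamma^{lj}$ into $-\gamma^{lj}\,\partial_s\gamma_{tl}$ and differentiates the \emph{simpler} object $\gamma_{tl}=\delta_{tl}-\frac{u_tu_l}{1+w}$ rather than $\gamma^{ik}$ directly; this bypasses exactly the ``bookkeeping obstacle'' you flagged, since $\partial_s\gamma_{tl}=-\frac{u_t\delta_{ls}+u_l\gamma^{ts}}{1+w}$ already comes out in the desired form after one application of $u_l\gamma^{lj}=u_j/w$.
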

\begin{proof}
By straightforward calculations, we have
\begin{equation}
G^s=F^{ij}u_{kl} \frac{\partial}{\partial u_s}\big(\frac{1}{w} \gamma^{ik}\gamma^{lj}\big)
=\frac{u_s}{w^2} F^{ij} a_{ij} +\frac{2}{w} F^{ij} \gamma^{ik} u_{kl} \frac{\partial\gamma^{lj}}{\partial u_s}.
\end{equation}
From \eqref{matrix} we have
\[
\gamma^{ik} u_{kl}=w a_{ik} \gamma_{kl}.
\]
It follows that
\[
\gamma^{ik} u_{kl} \frac{\partial\gamma^{lj}}{\partial u_s}
=w a_{ik} \gamma_{kl} \frac{\partial\gamma^{lj}}{\partial u_s}
=-w a_{ik} \gamma^{lj} \frac{\partial\gamma_{kl}}{\partial u_s}
\]
since $\gamma_{kl} \gamma^{lj}=\delta_{kj}$. Next,
\begin{equation}
\frac{\partial\gamma_{kl}}{\partial u_s}
=-\frac{u_k \delta_{ls}+u_l \gamma^{ks}}{1+w}
\end{equation}
and
\begin{equation}
u_l \gamma^{lj}=\frac{u_j}{w}.
\end{equation}
Thus
\[
\gamma^{ik} u_{kl} \frac{\partial\gamma^{lj}}{\partial u_s}
=\frac{a_{ik} (w u_k \gamma^{sj}+u_j \gamma^{ks})}{1+w}.
\]
Then we obtain \eqref{GS-2}.
\end{proof}

Next, we establish the estimate
\begin{equation}
\label{BC2-3}
|u_{\alpha n} (0)| \leq C \mbox{  for } 1\leq \alpha \leq n - 1.
\end{equation}

Define
$$\omega_\delta = \{x \in \Omega: \rho (x') < x_n < \rho (x') + \delta^2 , |x'| < \delta\},$$
we can find that the boundary $\partial \omega_\delta$ consists of three parts:
$$\partial \omega_\delta
= \partial_1 \omega_\delta \cup \partial_2 \omega_\delta \cup \partial_3 \omega_\delta,$$ where
$\partial_1 \omega_\delta$, $\partial_2 \omega_\delta$ and $\partial_3 \omega_\delta$  are defined by $\{x_n=\rho\} \cap\overline{\omega}_{\delta}$, $\{ x_n=\rho+\delta^2\}\cap\overline{\omega}_{\delta}$
and $\{|x'| = \delta\}\cap\overline{\omega}_{\delta}$ respectively.
Since $\Omega$ is admissible,
there exist two positive constants $\theta$ and $K$ satisfying
\begin{equation}
\label{BC2-5}
(\kappa_1^b - 3 \theta, \ldots, \kappa_{n-1}^b - 3 \theta, 2 K) \in \Gamma.
\end{equation}
Define
\begin{equation}
\label{BC2-6}
v = \rho (x') - x_n - \theta |x'|^2 + K x_n^2.
\end{equation}
We see that when $\delta$ depending on $\theta$ and $K$ is sufficiently small, we have
\begin{equation}
\label{BC2-12}
\begin{aligned}
v \leq & - \frac{\theta}{2} |x'|^2, & \mbox{ on } \partial_1 \omega_\delta\\
v \leq & - \frac{\delta^2}{2}, & \mbox{ on } \partial_2 \omega_\delta\\
v \leq & - \frac{\theta \delta^2}{2},   & \mbox{ on } \partial_3 \omega_\delta.
\end{aligned}
\end{equation}
In view of \eqref{BC2-1} and \eqref{BC2-5},  $\lambda (D^2 v)\in \Gamma$
on $\overline{\omega}_\delta$.
Thus, there exists an uniform constant $\eta_0 > 0$ depending only on $\theta$, $\p\Omega$
and $K$ satisfying
\[
\lambda (D^2 v - 2 \eta_0 I) \in \Gamma \mbox{ on } \overline{\omega}_\delta.
\]
Then we have
\begin{equation}
\label{BC2-4}
\lambda \left(\frac{1}{w} \{\gamma^{is} (v_{st} - 2 \eta_0 \delta_{st}) \gamma^{jt}\}\right) \in \Gamma
\mbox{ on }
\overline{\omega}_\delta.
\end{equation}
To prove \eqref{BC2-3}, we shall use the strategy of \cite{Ivochkina90} to consider
the function
\[
W := \nabla'_\alpha (u - \varphi) - \frac{1}{2} \sum_{1\leq \beta \leq n - 1} (u_\beta - \varphi_{\beta})^2
\]
defined on $\ol \omega_\delta$ for small $\delta$,
where
\[
\nabla'_\alpha u := u_\alpha + \rho_\alpha u_n, \mbox{ for } 1\leq \alpha \leq n - 1.
\]
Since the proof of the following lemma is similar to that of Lemma 5.3 in \cite{JaoSun22}, we omit its proof. For reader's convenience, we provide
a detailed proof for a similar result (Lemma \ref{gj-lem2}) later.
\begin{lemma} If $\delta$ is sufficiently small, we have
\label{BC2-lem1}
\begin{equation}
\label{BC2-15}
LW \leq C \left(1 + |D W| + \sum_i G^{ii} + G^{ij} W_i W_j\right),
\end{equation}
where $C$ is a positive constant depending on $n$, $\theta_0$, $\|\psi\|_{C^1 (\ol \Omega \times [-\mu_0, \mu_0]}$, $\|\varphi\|_{C^3(\ol \Omega)}$ and $\partial \Omega$, where $\mu_0 = \|u\|_{C^0 (\overline{\Omega})}$.
\end{lemma}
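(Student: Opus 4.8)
The goal is to estimate $LW$ from above for the auxiliary function
\[
W = \nabla'_\alpha(u-\varphi) - \tfrac12\sum_{1\le\beta\le n-1}(u_\beta-\varphi_\beta)^2
\]
on $\ol\omega_\delta$. The strategy is the standard tangential-operator computation: apply the tangential vector field $\nabla'_\alpha = \partial_\alpha + \rho_\alpha\partial_n$ to the equation \eqref{1-1-1} written as $G(D^2u, Du) = \psi(x,u)$, and differentiate once more in the same direction. First I would record that $G^{ij}$ and $G^i$ are as in \eqref{BC2-25} and Lemma \ref{lemGS2}, and that since the operator $f\circ\lambda\circ A$ is concave in $D^2u$ (by \eqref{cj-9}, $f^{1/n}$ concave, hence $f^{1/n}\circ\lambda(A[u])$ is concave in the Hessian because $A[u]$ is linear in $D^2u$ for fixed $Du$), the second derivative of $G$ in the direction $\nabla'_\alpha$ contributes a term of a favorable sign. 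Concretely, writing $T = \nabla'_\alpha$, one gets schematically
\[
G^{ij}\partial_{ij}(Tu) = T\psi - (\text{terms from }T\text{ falling on }G\text{'s coefficients}) - G^{ij,kl}\partial_{ij}(Tu - \text{corr})\partial_{kl}(\cdots) + \cdots,
\]
and the concavity term $-G^{ij,kl}(\cdots)(\cdots) \le 0$ is discarded. The error terms that arise when $T$ hits the $x$-dependence of $G$ (through $\rho_\alpha(x')$, i.e. $T$ is not a constant-coefficient operator) and the $Du$-dependence produce quantities controlled by $C(1 + |DW| + \sum_i G^{ii})$, using the $C^1$ bound \eqref{spacelike}, the $C^3$ bound on $\varphi$ and $\partial\Omega$, and Lemma \ref{lemGS2} to bound $|G^i|$ by $C\sum_i G^{ii}$ after using that the $\kappa_i$ are the eigenvalues entering $f_i$.

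Next I would handle the quadratic correction term $-\tfrac12\sum_\beta(u_\beta-\varphi_\beta)^2$. Applying $L = G^{ij}\partial_{ij}$ to it produces
\[
-\sum_\beta(u_\beta-\varphi_\beta)\,G^{ij}\partial_{ij}(u_\beta-\varphi_\beta) - \sum_\beta G^{ij}\partial_i(u_\beta-\varphi_\beta)\partial_j(u_\beta-\varphi_\beta).
\]
The first sum is again treated by differentiating the equation once in the coordinate direction $\partial_\beta$ (not tangential — here $\beta \le n-1$ is a genuine coordinate), giving $G^{ij}\partial_{ij}u_\beta = \partial_\beta\psi - G^i\partial_i u_\beta$, which is bounded by $C(1+\sum_i G^{ii})$; so the first sum is $\le C(1+\sum_i G^{ii})$. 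The crucial point is the second sum: $-\sum_\beta G^{ij}\partial_i(u_\beta-\varphi_\beta)\partial_j(u_\beta-\varphi_\beta) \le 0$ since $\{G^{ij}\}>0$, and this \emph{negative} quadratic term is precisely what absorbs the bad gradient terms $G^{ij}\partial_i(\nabla'_\alpha(u-\varphi))\partial_j(\cdots)$ that would otherwise be uncontrolled — except that one does not want to rely on that absorption here; rather, one keeps the term $G^{ij}W_iW_j$ on the right-hand side of \eqref{BC2-15} and simply bounds all remaining cross terms $G^{ij}(\text{stuff})_i W_j$ by Cauchy–Schwarz against $\varepsilon G^{ij}W_iW_j + C\varepsilon^{-1}\sum_i G^{ii}$. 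This is why the final inequality has both $|DW|$ and $G^{ij}W_iW_j$ on the right.

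**Main obstacle.** The genuinely delicate step — and the reason the authors compute a comparable lemma (Lemma \ref{gj-lem2}) in detail rather than leaving it all to \cite{JaoSun22} — is controlling the third-order terms that do not obviously fit. When $T = \nabla'_\alpha$ is applied twice, besides the discarded concave term one gets cross terms of the form $G^{ij}_{,p_k}\partial_{ij}(Tu)\,\partial_k(\cdots)$ and pure first-order-in-$h$ terms coming from the $Du$-slot of $G$; in the Minkowski setting (unlike the Euclidean case) the coefficient $G^s$ from Lemma \ref{lemGS2} carries the factor $\sum_i f_i\kappa_i$, which is $n K_\eta = n\psi$ by Euler's relation and hence bounded, but also carries $F^{ij}a_{it}$ terms that are \emph{not} a priori bounded — they must be estimated by $C\sum G^{ii}$ using that the relevant contractions are controlled by $\sum_i f_i(1+|\kappa_i|)$ and, via \eqref{cj-6}, the negative eigenvalues are dominated. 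Making all these estimates uniform in the width $\delta$ while only using $C^3$ control on $\varphi$ is the technical heart; everything else is the routine bookkeeping that the paper suppresses by citing \cite{JaoSun22} and deferring the model computation to Lemma \ref{gj-lem2}.
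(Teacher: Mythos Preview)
Your outline has the right shape but contains a genuine gap and one misplaced ingredient.

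\textbf{The misplaced ingredient.} Concavity of $G$ in $D^2u$ plays no role here. Since $W$ is first order in $u$, computing $LW = G^{ij}W_{ij}$ only produces third derivatives of $u$, which you control by differentiating the equation $G(D^2u,Du)=\psi$ \emph{once}: $G^{ij}u_{ijk} + G^s u_{sk} = \partial_k\psi$. No term of the form $G^{ij,kl}(\cdots)(\cdots)$ ever appears, so there is nothing to discard by concavity. (You may be conflating this with the interior second-derivative estimate of Section~4.)

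\textbf{The gap.} Your key claim that $|G^s|\le C\sum_i G^{ii}$ (equivalently $\sum_i f_i|\kappa_i|\le C\sum_i f_i$) is false, and property \eqref{cj-6} does not rescue it: \eqref{cj-6} says $f_j$ is \emph{large} when $\kappa_j<0$, which goes the wrong way for bounding $f_j|\kappa_j|$. From Lemma~\ref{lemGS2} and the cross terms $G^{ij}u_{ti}(\cdot)_j$ one only gets $|G^s|\le C\sum_i f_i|\kappa_i|$ (cf.\ \eqref{BC2-11}, \eqref{BC2-19}). This quantity must be absorbed, not bounded directly.

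The absorption is done precisely by the negative quadratic that you propose to throw away. The term $-\tfrac12\sum_\beta G^{ij}(u-\varphi)_{\beta i}(u-\varphi)_{\beta j}$ is, after diagonalisation as in \eqref{BC2-14}, of the form $-cK\sum_i\big(\sum_{\beta\le n-1}\eta_{\beta i}^2\big)f_i\kappa_i^2$. One then writes $\sum_i f_i|\kappa_i|\le \epsilon\sum_i f_i\kappa_i^2 + C_\epsilon\sum_i f_i$ and absorbs the first piece --- but only if $\sum_{\beta\le n-1}\eta_{\beta i}^2$ is bounded below for \emph{every} $i$ (case (a) in the paper's proof of the parallel Lemma~\ref{gj-lem2}). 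When some eigendirection of $\{a_{ij}\}$ is nearly normal to $\partial\Omega$ (case (b)), this fails for that index $r$, and one needs the further split (b-1)/(b-2) exploiting the specific structure of $f=\prod_i\lambda_i$ (in particular $f_1\kappa_1\le C$ when $\kappa_1$ dominates). This case analysis, together with the $-G^sW_s$ estimate of the type \eqref{BC2-30}, is the actual technical heart; the $G^{ij}W_iW_j$ term on the right of \eqref{BC2-15} enters only in controlling $-G^sW_s$ via Cauchy--Schwarz, not in handling $\sum_i f_i|\kappa_i|$.
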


As \cite{JW21} and \cite{JaoSun22}, we consider the following barrier on $\overline{\omega}_\delta$, for sufficiently small $\delta$,
\begin{equation}
\label{BC2-20}
\Psi := v - td + \frac{N}{2} d^2,
\end{equation}
where $v(x)$ is defined by \eqref{BC2-6}, $d (x) := \mathrm{dist} (x, \partial \Omega)$ is the distance from $x$ to the boundary $\partial \Omega$,
and $t,N$ are two positive constants to be determined later. Since $f^{1/n}$ is concave in $\Gamma$ and homogeneous of degree one and $|Dd| \equiv 1$ on the boundary $\partial \Omega$,  by \eqref{BC2-4} and \eqref{cj-2}, we have,
\[
\begin{aligned}
 &\frac{1}{n} \psi^{\frac{1}{n} - 1}G^{ij} (D^2 v - \eta_0 I + N D d \otimes D d)_{ij}\\
   \geq \,& G^{1/n} (D^2 v - \eta_0 I + N D d \otimes D d, Du)\\
   \geq \,& \mu (N) \mbox{ on } \overline{\omega}_\delta
\end{aligned}
\]
for some positive constant $\mu (N)$ satisfying $\lim_{N \rightarrow + \infty}\mu (N) = +\infty$.
We then have
\begin{equation}
\label{gj-15}
\begin{aligned}
G^{ij} \Psi_{ij} \geq \,& n \psi^{1-1/n}\mu(N) + \eta_0 \sum_i G^{ii} + (N d - t) G^{ij} d_{ij}\\
\geq \,& n \epsilon_0^{1-1/n}\mu(N) + \eta_0 \sum_i G^{ii} + (N d - t) G^{ij} d_{ij}\\
  \geq \,& 2 \mu_1(N) + (\eta_0 - C N \delta - C t) \sum_i G^{ii}
\end{aligned}
\end{equation}
on $\overline{\omega}_\delta$, where $\mu_1 (N) := n (\inf \psi)^{1-1/n}\mu(N)/2$.
Define
\begin{equation}\label{new3.2}
\tilde{W} := 1 - \exp\{- b W\}.
\end{equation}
By \eqref{BC2-15}, we can choose the constant $b$ large enough so that
\begin{equation}
	\label{new3.4}
	\begin{aligned}
		L \tilde{W} = \,& G^{ij} \big(-e^{-bW} b^2 W_i W_j + b e^{-bW} W_{ij}\big)\\
		\leq \,& b e^{-bW} \left[ C \left(1 + |D W| + \sum_i G^{ii} \right) + (C - b) G^{ij} W_i W_j\right]\\
		\leq \,&  C (1 + |D \tilde{W}| + \sum_i G^{ii}) + (C - b) G_{ij} W_i W_j b e^{-bW}\\
		\leq \,&  C (1 + |D \tilde{W}| + \sum_i G^{ii}).
	\end{aligned}
\end{equation}
We consider the function
\[
\Phi := R \Psi - \tilde{W},
\]
where $R$ is a large undetermined positive constant. We shall prove
\begin{equation}
\label{add-6}
\Phi \leq 0 \mbox{ on } \overline{\omega}_\delta
\end{equation}
by choosing suitable positive constants $\delta$, $t$, $N$ and $R$.

We first consider the case that the maximum of $\Phi$
is achieved at an interior point $x_0 \in \omega_\delta$. It follows that at $x_0$,
\[
|D\tilde{W}|=R |D\Psi|
\]
and if $N$ is sufficiently large and $\delta < \sqrt{\mu_1 (N)}/2CN$,
\[
|D \Psi| = |D v - t D d + N d D d| \leq C (1+t) + C\delta N \leq \mu_1(N)^{1/2} \mbox{ in } \omega_\delta.
\]
Therefore, by \eqref{gj-15}, provided $\delta$ and $t$ sufficiently small such that $C N \delta + C t < \eta_0/2$, we have
\begin{equation}
\label{BC2-21}
L \Psi \geq \mu_1 (N) + \mu_1(N)^{1/2} |D \Psi| + \frac{\eta_0}{2} \sum_i G^{ii}.
\end{equation}
By \eqref{new3.4} and \eqref{BC2-21} we obtain, at $x_0$,
\[
\begin{aligned}
0 \geq L \Phi
  \geq \,& R\mu_1 (N) + R\mu_1(N)^{1/2} |D \Psi| + \frac{R\eta_0}{2} \sum G^{ii}\\
     & - C \left(1 + |D \tilde{W}| + \sum G^{ii}\right)\\
 \geq \,& R \mu_1(N) - C + R (\mu_1(N)^{1/2} - C)|D \Psi|  \\\,&+\left(\frac{R\eta_0}{2} - C\right)\sum G^{ii} > 0
\end{aligned}
\]
provided $N$ and $R$ are chosen sufficiently large which is a contradiction. Thus, the maximum of $\Phi$
is achieved at the boundary $\partial \omega_\delta$.
We may further assume $\delta < 2t/N$ so that
\begin{equation}\label{new3.1}
- t d + \frac{N}{2} d^2 \leq 0 \mbox{ on } \overline{\omega}_\delta.
\end{equation}
By \eqref{BC2-12} and \eqref{new3.1},
we can conclude that $\Phi \leq 0$ on $\partial \omega_\delta$ by choosing $R$ larger and then \eqref{add-6} is proved.

Since $(R \Psi - \tilde{W}) (0) = 0$, we have $(R \Psi - \tilde{W})_n (0) \leq 0$. Therefore, we get
\[
u_{n \alpha} (0) \geq - C.
\]
The above arguments also hold for
\[
W = - \nabla'_\alpha u - \frac{1}{2} \sum_{1 \leq \beta \leq n - 1} (u_\beta-\varphi_\beta)^2.
\]
Hence, we obtain \eqref{BC2-3}.

Since the mean curvature of $M_u$, $H > 0$, it suffices to prove an upper bound
\begin{equation}
\label{gj-16}
u_{nn} (0) \leq C.
\end{equation}
We use an idea of \cite{Ivochkina91} and \cite{ILT96}
to prove
\begin{equation}
		\label{normal}
		S_{1;n} (D^2 u, Du)
				= \sigma_{1} \left( \lambda \left(I + \frac{Du(n) \otimes Du(n)}{1 - |Du(n)|^2}\right) D^2 u(n) \right)
		\geq c_0
\end{equation}
for some uniform constant $c_0 >0$. Then \eqref{gj-16} follows immediately by \eqref{jg-4} if \eqref{normal} is proved.

To prove \eqref{normal}, we introduce some notations.
Suppose  $W$  is a $(0,2)$ tensor field on $\overline{\Omega}$, namely $W\in C^2(T^*\overline{\Omega}\otimes T^*\overline{\Omega})$,
where $T^*\overline{\Omega}$ is the co-tangent bundle of $\overline{\Omega}$.
Let $W'$
be the projection of $W|_{\partial\Omega}$ in the bundle $T^* \partial \Omega\otimes T^*\partial\Omega$,
where $T^*\partial \Omega$ is the co-tangent bundle of $\partial \Omega$. Similarly, for a $1$-form $P$ on $\ol \Omega$,
denote $P'$ to be the projection of $P|_{\partial \Omega}$ in the co-tangent bundle $T^* \partial \Omega$.
Denote $\tilde{g}'$ to be the induced metric on $\partial \Omega \subset \ol \Omega$. To proceed we recall an easy lemma as follows.
\begin{lemma}
\label{gj-lem1}
Let $V'$, $W'$ be $(0,2)$ tensor fields on $\partial \Omega$ defined by
\[
V' = V'_{\alpha\beta} \theta^\alpha \otimes \theta^\beta;\ \ W' = W'_{\alpha\beta} \theta^\alpha \otimes \theta^\beta
\]
locally, where $\{\theta^1, \ldots, \theta^{n-1}\}$ is an arbitrary local frame for $T^* \partial \Omega$. Suppose
\[
\tilde{g}' = \tilde{g}'_{\alpha\beta} \theta^\alpha \otimes \theta^\beta
\]
and $\{\tilde{g}'^{\alpha\beta}\}_{1\leq \alpha, \beta \leq n-1}$ is the inverse of $\{\tilde{g}'_{\alpha\beta}\}_{1\leq \alpha, \beta \leq n-1}$.
Define
\[
\{U'_{\alpha \beta}\} = \{V'_{\alpha\beta}\} \{\tilde{g}'^{\alpha\beta}\} \{W'_{\alpha\beta}\},
\]
namely
\[
U'_{\alpha \beta} = \sum_{1\leq\gamma_1, \gamma_2\leq n-1}V'_{\alpha\gamma_1} g'^{\gamma_1\gamma_2} W'_{\gamma_2\beta}, 1\leq \alpha, \beta \leq n-1.
\]
Then
\[
U' := U'_{\alpha \beta} \theta^\alpha \otimes \theta^\beta
\]
is a $(0,2)$ tensor field on $\partial \Omega$. For simplicity, we write $U' = V' W'$ in the following.
\end{lemma}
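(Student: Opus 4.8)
The plan is to use the fact that being a $(0,2)$-tensor field is a purely local, coframe-independent property: it suffices to exhibit the locally defined object $U' = U'_{\alpha\beta}\,\theta^\alpha \otimes \theta^\beta$ and to check that it does not depend on the choice of local coframe $\{\theta^1, \ldots, \theta^{n-1}\}$ for $T^*\partial\Omega$; the local pieces then agree on overlaps and glue to a global field on $\partial\Omega$. The conceptual point, which makes the result ``easy,'' is that the inverse induced metric $\{\tilde g'^{\alpha\beta}\}$ is itself a genuine symmetric $(2,0)$-tensor field on $\partial\Omega$ (here one uses only that $\tilde g'$ is a Riemannian metric, so its component matrix is invertible and smooth). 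Hence $V' \otimes \tilde g'^{-1} \otimes W'$ is a well-defined tensor field of type $(2,4)$ on $\partial\Omega$, and $U'$ is obtained from it by two contractions: pairing the first contravariant slot of $\tilde g'^{-1}$ with the second covariant slot of $V'$, and the second contravariant slot of $\tilde g'^{-1}$ with the first covariant slot of $W'$. Since contraction of tensors is coframe-independent, $U'$ is a $(0,2)$-tensor field, which is exactly the assertion.

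To spell out the coframe-independence directly, let $\{\hat\theta^\alpha\}$ be a second local coframe on an overlapping open set and write $\hat\theta^\alpha = A^\alpha_\gamma\,\theta^\gamma$ with $(A^\alpha_\gamma)$ pointwise invertible and inverse $(B^\gamma_\alpha)$, so that $A^\alpha_\gamma B^\gamma_\beta = B^\alpha_\gamma A^\gamma_\beta = \delta^\alpha_\beta$. Expanding $V'$, $W'$ and $\tilde g'$ in both coframes yields
\[
\hat V'_{\alpha\beta} = B^\gamma_\alpha B^\delta_\beta V'_{\gamma\delta}, \quad
\hat W'_{\alpha\beta} = B^\gamma_\alpha B^\delta_\beta W'_{\gamma\delta}, \quad
\hat{\tilde g}'_{\alpha\beta} = B^\gamma_\alpha B^\delta_\beta \tilde g'_{\gamma\delta},
\]
and inverting the last identity gives $\hat{\tilde g}'^{\alpha\beta} = A^\alpha_\gamma A^\beta_\delta\,\tilde g'^{\gamma\delta}$. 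Substituting these into $\hat U'_{\alpha\beta} = \hat V'_{\alpha\gamma_1}\hat{\tilde g}'^{\gamma_1\gamma_2}\hat W'_{\gamma_2\beta}$ and collapsing the internal sums via $B^\mu_\nu A^\nu_\lambda = \delta^\mu_\lambda$, one obtains $\hat U'_{\alpha\beta} = B^\mu_\alpha B^\nu_\beta\,U'_{\mu\nu}$, which is precisely the transformation law of the components of a $(0,2)$-tensor. Therefore $\hat U'_{\alpha\beta}\,\hat\theta^\alpha \otimes \hat\theta^\beta = U'_{\mu\nu}\,\theta^\mu \otimes \theta^\nu$ on the overlap, so $U'$ is well defined independently of the coframe, which completes the proof.

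Finally, one notes that $U'$ inherits the regularity of its ingredients: since $V'$ and $W'$ are $C^2$ and $\partial\Omega$ is smooth (so that $\tilde g'$, and hence $\tilde g'^{-1}$, is smooth and nowhere degenerate), the components $U'_{\alpha\beta}$ are $C^2$ functions, and $U' \in C^2(T^*\partial\Omega \otimes T^*\partial\Omega)$. There is no genuine obstacle in this argument; the only point requiring care is the index bookkeeping in the change-of-coframe computation — in particular, remembering that $\{\tilde g'^{\alpha\beta}\}$, being the matrix inverse of the component matrix of the metric, transforms contravariantly, which is exactly what cancels the two covariant factors $V'$ and $W'$ and leaves $U'$ with precisely two covariant indices.
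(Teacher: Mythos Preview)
Your argument is correct: showing that $U'$ is obtained from the well-defined tensor $V'\otimes \tilde g'^{-1}\otimes W'$ by two contractions, together with the explicit change-of-coframe verification, is exactly the standard and complete justification. The paper itself introduces this as ``an easy lemma'' and does not supply a proof, so there is nothing further to compare; your write-up simply fills in the routine details the authors omitted.
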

We set
\[
m:= \inf_{\partial{\Omega}} \mathrm{tr}_{\tilde{g}'} \left(\left(\tilde{g}' + \frac{(Du)'\otimes(Du)'}{1-|(Du)'|_g^2}\right) (D^2 u)'\right),
\]
where $\mathrm{tr}_{\tilde{g}'}$ denotes the trace with respect to the metric $\tilde{g}'$.
We shall prove $m \geq c_0$ for some positive constant $c_0$.
Suppose $m$ is attained at $x_0 \in \partial \Omega$. We may assume that $x_0$ is the origin and the
positive $x_n$-axis is in the interior normal direction to $\partial \Omega$ at the origin as before.
Furthermore, we may also assume the boundary $\partial \Omega$ is given by \eqref{BC2-1} near the origin.

Choose a local orthonormal frame
$\{\tilde{e}_1, \ldots, \tilde{e}_n\}$ around $x_0$ such that $e_n$ is the interior normal to $\partial \Omega$.
$\tilde{\nabla}$ denotes the standard connection of $\mathbb{R}^n$. Write $\tilde{e}_i (x) = \tilde{e}^j_i (x)\partial_j$ for $i = 1, \ldots n$, where $\partial_1,\cdots,\partial_n$ is the  rectangular coordinate system. Thus, we have
 \[
\tilde{\nabla}_i u:=\tilde{\nabla}_{\tilde{e}_i}u= \tilde{e}_i^j \partial_ju=\tilde{e}_i^ju_j
\]
and
\[
\tilde{\nabla}_{ij} u:=\tilde{\nabla}_{\tilde{e}_i}\nabla_{\tilde{e}_j}u =\tilde{e}_i^k\tilde{e}_j^l\partial_{k}\partial_l u = \tilde{e}_i^k \tilde{e}_j^l u_{kl}.
\]
We may also assume that $\tilde{e}_i^j (x_0) = \delta_{ij}$
for $1 \leq i, j \leq n$ and $\{\tilde{\nabla}_{\alpha \beta} u (x_0)\}_{1 \leq \alpha, \beta \leq n-1}$ is diagonal.

Since $u - \varphi = 0$ on $\partial \Omega$, we find
\begin{equation}
\label{cor-2}
\tilde{\nabla}_{\alpha \beta} u = \tilde{\nabla}_{\alpha \beta} \varphi - \tilde{\nabla}_n (u - \varphi) \sigma_{\alpha \beta}, \ 1 \leq \alpha, \beta \leq n - 1
\end{equation}
on $\partial \Omega$ near $x_0$, where $\sigma_{\alpha \beta} = \langle D_{e_\alpha} e_\beta, e_n\rangle$. Note that $\sigma_{\alpha \beta}$
is the second fundamental form of $\partial \Omega$ when it is restricted on $\partial \Omega$.
By \eqref{cor-2} and the definition of $m$, we have
\begin{equation}
\label{gj-17}
\begin{aligned}
- \eta := \,&  - A (x) \tilde{\nabla}_n (u - \varphi)
   = \left(\delta_{\alpha\beta}+\frac{\tilde{\nabla}_\alpha \varphi\tilde{\nabla}_\beta \varphi}{1-|\tilde{\nabla}' \varphi|^2}\right)
     \tilde{\nabla}_{\alpha \beta} (u - \varphi)\\
     \geq \,& m - \left(\delta_{\alpha\beta}+\frac{\tilde{\nabla}_\alpha \varphi\tilde{\nabla}_\beta \varphi}{1-|\tilde{\nabla}' \varphi|^2}\right)
     \tilde{\nabla}_{\alpha \beta} \varphi
\end{aligned}
\end{equation}
on $\partial \Omega$ near the origin,
where
\[
A (x) := \sum_{1\leq\alpha, \beta \leq n-1} \sigma_{\alpha \beta} \left(\delta_{\alpha\beta}+\frac{\tilde{\nabla}_\alpha \varphi\tilde{\nabla}_\beta \varphi}{1-|\tilde{\nabla}' \varphi|^2}\right)
\]
and $|\tilde{\nabla}' \varphi|^2 := \sum_{\alpha \leq n-1} (\tilde{\nabla}_\alpha \varphi)^2$.
Define
\[
V = - \eta(x) - m + \left(\delta_{\alpha\beta}+\frac{\tilde{\nabla}_\alpha \varphi\tilde{\nabla}_\beta \varphi}{1-|\tilde{\nabla}' \varphi|^2}\right)
     \tilde{\nabla}_{\alpha \beta} \varphi - \frac{K}{2}\sum_{\beta = 1}^{n-1} (u - \varphi)_\beta^2
\]
in
$\omega_\delta = \{x \in \Omega: \rho (x') < x_n < \rho (x') + \delta^2 , |x'| < \delta\}$.
\begin{lemma}
\label{gj-lem2}
There exist positive constants $\delta$ sufficiently small and $K$ sufficiently large such that
\begin{equation}
\label{gj-18}
LV \leq C \left(1 + |D V| + \sum_i G^{ii} + G^{ij} V_i V_j\right),
\end{equation}
where $C$ is a positive constant depending on $n$, $\theta_0$, $\|\psi\|_{C^1 (\ol \Omega \times [-\mu_0, \mu_0]}$, $\|\varphi\|_{C^3(\ol \Omega)}$ and $\partial \Omega$, where $\mu_0 = \|u\|_{C^0 (\overline{\Omega})}$.
\end{lemma}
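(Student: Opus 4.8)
The plan is to compute $LV = G^{ij}V_{ij}$ term by term, discarding everything manifestly bounded by the right-hand side of \eqref{gj-18} and tracking the few dangerous terms. Write $V = -\eta(x) - m + g(x) - \frac{K}{2}\sum_{\beta<n}(u-\varphi)_\beta^2$, where $\eta(x) = A(x)\,\tilde\nabla_n(u-\varphi) = A(x)\,\tilde e_n^k(u-\varphi)_k$ is linear in $Du$ with coefficient $A(x)$ depending only on $x$ (through $\varphi$ and the second fundamental form of $\partial\Omega$), and $g(x) = (\delta_{\alpha\beta}+\tilde\nabla_\alpha\varphi\,\tilde\nabla_\beta\varphi/(1-|\tilde\nabla'\varphi|^2))\tilde\nabla_{\alpha\beta}\varphi$ is a known function with $C^2$-norm controlled by $\|\varphi\|_{C^3}$ and $\partial\Omega$. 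Since $L$ kills the constant and $|Lg|\le C\sum_i G^{ii}$, it remains to estimate $L(-\eta)$ and $L(-\frac K2\sum_{\beta<n}(u-\varphi)_\beta^2)$.

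First I would perform the top-order reduction. Expanding these, the terms in which both derivatives fall on an $x$-coefficient, or on $\varphi$, are $\le C(1+\sum_i G^{ii})$ by \eqref{spacelike}; what survives is a mixed term $-2G^{ij}\partial_i(A\tilde e_n^k)u_{kj}$, the true third-order term $-A\tilde e_n^k G^{ij}u_{kij}$, the good negative term $-K\sum_{\beta<n}G^{ij}(u-\varphi)_{\beta i}(u-\varphi)_{\beta j}$, and $-K\sum_{\beta<n}(u-\varphi)_\beta G^{ij}(u-\varphi)_{\beta ij}$. Differentiating \eqref{1-1-1} once gives $G^{ij}u_{kij} = \psi_{x_k}+\psi_z u_k - G^s u_{sk}$, so, modulo terms $\le C(1+\sum_i G^{ii})$, every third-order contribution collapses to $G^s\sum_k c^k u_{sk}$, where $c^k = A\tilde e_n^k$ for $k=n$ and $c^k = A\tilde e_n^k + K(u-\varphi)_k$ for $k<n$, with $|c^k|\le C(1+K)$. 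By the very way $V$ was chosen, $\sum_k c^k u_{ki} = \hat g_i - V_i$ with $|\hat g|\le C(1+K)$ (indeed $V_i = -\sum_k c^k u_{ki} + \hat g_i$), so this term equals $G^s\hat g_s - G^s V_s$.

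Next I would control the $G^s$-terms. By Lemma \ref{lemGS2} together with $\sum_i f_i\kappa_i = nf = n\psi$, $G^s = \frac{n\psi}{w^2}u_s + R^s$ where $R^s$ is built linearly from the matrix $Fa$ (which commutes with $a$ and has eigenvalues $f_i\kappa_i$), so that $|R^s|$ may grow like $|D^2u|$ --- here lies the heart of the difficulty, since $G^s$ is \emph{not} bounded. The first part, contracted with $\hat g - V$, contributes only $C(1+|DV|)$. For $R^s$ I would use Cauchy--Schwarz in the form $|R^s P_s|\le \epsilon\sum_i f_i\kappa_i^2 + \frac C\epsilon G^{ij}P_iP_j$ (splitting $F = F^{1/2}F^{1/2}$ and using $|Du|\le 1$), valid for any vector $P$; with $T := \sum_i f_i\kappa_i^2$ (comparable, by \eqref{spacelike}, to $\sum_m G^{ij}u_{mi}u_{mj}$) and $P = \hat g$ resp. $P=V$, this bounds both $G^s$-terms by $\epsilon T + C_{\epsilon,K}(\sum_i G^{ii}+G^{ij}V_iV_j)$. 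The mixed term $-2G^{ij}\partial_i(A\tilde e_n^k)u_{kj}$ is handled the same way ($\le\epsilon T + C_\epsilon\sum_i G^{ii}$, using $\sum_k G^{ij}u_{ki}u_{kj}\le CT$). At this point $LV\le C_K(1+|DV|+\sum_i G^{ii}+G^{ij}V_iV_j) + C\epsilon\,T - \frac K2\sum_{\beta<n}G^{ij}u_{\beta i}u_{\beta j}$.

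The crux --- and the step I expect to be the main obstacle --- is absorbing $T$, since the good negative term controls only the tangential part, while $T\sim G^{ij}u_{ni}u_{nj}+\sum_{\beta<n}G^{ij}u_{\beta i}u_{\beta j}$ also carries the double-normal contribution. To deal with this I would use the thinness of $\omega_\delta$: because the tangential derivatives of $u-\varphi$ vanish on $\partial\Omega$ and $\omega_\delta$ has width $O(\delta^2)$, one has $|(u-\varphi)_k|\le C\delta$ for $k<n$ on $\omega_\delta$, hence $c^k = A(0)\delta_{nk}+O((1+K)\delta)$ there, so $V_i = -A(0)u_{ni}+O((1+K)\delta\,|D^2u|)+O(1+K)$. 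Squaring and contracting with $G$ gives $G^{ij}u_{ni}u_{nj}\le C(G^{ij}V_iV_j+(1+K)^2\delta^2 T+(1+K)^2\sum_i G^{ii})$, whence, once $\delta$ is small enough that $C(1+K)^2\delta^2<\frac12$, $\;T\le C_2(G^{ij}V_iV_j+K^2\sum_i G^{ii}+\sum_{\beta<n}G^{ij}u_{\beta i}u_{\beta j})$. Substituting back and then choosing $\epsilon$ (depending on the now-fixed $K$) with $CC_2\epsilon\le\frac K2$, the term $C\epsilon\,T$ is absorbed by $\frac K2\sum_{\beta<n}G^{ij}u_{\beta i}u_{\beta j}$ together with the admissible right-hand side, giving \eqref{gj-18}. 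The structural point throughout is that differentiating the equation and Lemma \ref{lemGS2} unavoidably manufacture the unbounded quantity $T=\sum_i f_i\kappa_i^2$, which survives only through the combined effect of the negative tangential-square term built into $V$, the near-normality of $c^k$ on the slab $\omega_\delta$, and the availability of $G^{ij}V_iV_j$ on the right of \eqref{gj-18}.
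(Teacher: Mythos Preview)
Your overall strategy---reduce $LV$ to a competition between a ``bad'' term $C\epsilon T$ with $T=\sum_i f_i\kappa_i^2$ and the ``good'' tangential square $-\tfrac K2\sum_{\beta<n}G^{ij}u_{\beta i}u_{\beta j}$, and close the gap by relating $G^{ij}u_{ni}u_{nj}$ back to $G^{ij}V_iV_j$---is coherent, but the step you flag as the crux contains a genuine circularity. You assert that ``$|(u-\varphi)_k|\le C\delta$ for $k<n$ on $\omega_\delta$'' because tangential derivatives of $u-\varphi$ vanish on $\partial\Omega$ and the slab has width $O(\delta^2)$. This is not available: passing from the boundary (where indeed $(u-\varphi)_\beta=O(\delta)$ by the $C^1$ estimate \eqref{spacelike}) into the interior of $\omega_\delta$ costs a factor $|D^2 u|\cdot\delta^2$, and $|D^2u|$ is precisely the quantity you do not yet control. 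Consequently the conclusion $c^k=A(0)\delta_{nk}+O((1+K)\delta)$ and the resulting bound $G^{ij}u_{ni}u_{nj}\le C(G^{ij}V_iV_j+(1+K)^2\delta^2 T+\cdots)$ are unjustified, and your absorption via ``$\delta$ small'' collapses.

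There is a simple repair, though it changes the balance of constants. For $k<n$ one only has $|c^k|\le C(1+K)$, but these coefficients multiply \emph{tangential} rows $u_{k\cdot}$; writing $c^n u_{ni}=\hat g_i-V_i-\sum_{k<n}c^k u_{ki}$ and contracting with $G$ gives
\[
G^{ij}u_{ni}u_{nj}\le C\Big(G^{ij}V_iV_j+(1+K)^2\sum_iG^{ii}+(1+K)^2\sum_{\beta<n}G^{ij}u_{\beta i}u_{\beta j}\Big),
\]
so $T$ is controlled by the right-hand side of \eqref{gj-18} plus $(1+K)^2$ times the good term. The absorption then requires $\epsilon\lesssim K(1+K)^{-2}$ (choose $\epsilon$ after $K$, reversing your order), and the final constant depends on $K$.

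For comparison, the paper avoids this device entirely. It writes $\sum_{\beta<n}G^{ij}u_{\beta i}u_{\beta j}=w\sum_i\big(\sum_{\beta<n}\eta_{\beta i}^2\big)f_i\kappa_i^2$ with $\eta=\gamma B$, and then performs a case analysis on whether $\sum_{\beta<n}\eta_{\beta i}^2$ is uniformly bounded below for all $i$. When it fails for a single index $r$, Bayard's lemma guarantees a lower bound for $i\neq r$, and the remaining direction is handled either by Guan's inequality (if $\kappa_r\le 0$), by the algebraic fact $f_r\kappa_r\le C$ for the specific curvature function $f$ (if the other $\kappa_i$ are small relative to $\kappa_r$), or by exploiting the smallness of $\eta_{\beta r}$ itself inside $|\gamma^{sl}b_{lr}V_s|$. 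Your route (once repaired) is structurally simpler and makes no use of these special features of $K_\eta$, but as written the thinness argument does not stand.
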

\begin{proof}
Differentiating the equation \eqref{1-1-1}, we get
\begin{equation}
\label{BC2-16}
\begin{aligned}
L V + G^s V_s
  \leq \,& C K + C \sum_i G^{ii} + C \sum_s |G^{s}| - 2 G^{ij} u_{ti} \left(Ae_n^t\right)_j   \\
     & + 2 K \sum_{\beta \leq n-1} G^{ij} u_{\beta i} \varphi_{\beta j} - K \sum_{\beta \leq n-1} G^{ij} u_{\beta i} u_{\beta j}.
\end{aligned}
\end{equation}
Since
\[
a_{ij} = \frac{1}{w}\gamma^{ik}u_{kl} \gamma^{lj},
\]
we have
\[
G^{ij} = \frac{\partial G}{\partial u_{ij}} = F^{st}\frac{\partial a_{st}}{\partial u_{ij}} = \frac{1}{w}\sum_{s,t} F^{st} \gamma^{is} \gamma^{tj}
\]
and
\[
u_{ij} = w\sum_{s,t} \gamma_{is} a_{st} \gamma_{tj}.
\]
Here $F^{ij}$ is defined by
\[
F^{ij} := \frac{\partial f(\lambda (A[u]))}{\partial a_{ij}}.
\]
It follows that
\[
\sum_{\beta \leq n - 1} G^{ij} u_{\beta i} u_{\beta j}
  = w \sum_{\beta \leq n - 1} \sum_{s,t}F^{ij} \gamma_{\beta s} \gamma_{\beta t} a_{si} a_{tj}
\]
and
\[
C_0^{-1} \sum_{i} F^{ii}\leq\sum_{i}G^{ii}\leq C_0 \sum_{i} F^{ii}
\]
for some positive constant $C_0$ depending on $\theta_0$.
We can find an orthogonal matrix $B = \{b_{ij}\}$ which can diagonalize $\{a_{ij}\}$ and $\{F^{ij}\}$ simultaneously, i.e.,
\[
F^{ij} =\sum_s b_{is} f_s b_{js} \mbox{ and } a_{ij} =\sum_s b_{is} \kappa_s b_{js}.
\]
Therefore, we have
\begin{equation}
	\begin{aligned}
		\sum_{\beta \leq n - 1} G^{ij} u_{\beta i} u_{\beta j} =& w \sum_{\beta \leq n - 1} F^{ij} \gamma_{\beta s} \gamma_{\beta t} a_{si} a_{tj}\\\notag	
		=& w \sum_{\beta \leq n - 1} \sum_i\left(\sum_s\gamma_{\beta s} b_{si}\right)^2  f_i \kappa_i^2\notag.
	\end{aligned}
\end{equation}
Let $\eta = (\eta_{ij}) = (\sum_s\gamma_{is} b_{sj})$. We find $\eta \cdot \eta^T = g$ and $|\det (\eta)| = \sqrt{1 - |Du|^2}$.
Hence, we obtain
\begin{equation}
\label{BC2-14}
\sum_{\beta \leq n - 1} G^{ij} u_{\beta i} u_{\beta j}
  = w \sum_{\beta \leq n - 1} \sum_i\eta_{\beta i}^2  f_i \kappa_i^2.
\end{equation}
We have
\begin{equation}
\label{BC2-19}
\left|G^{ij} u_{ti} \left(Ae_n^t\right)_j\right| = \bigg|\sum_{i,t}f_i \kappa_i b_{si} \gamma^{js} b_{pi} \gamma_{tp} \left(Ae_n^t\right)_j\bigg| \leq C \sum_i f_i |\kappa_i|.
\end{equation}
and
\begin{equation}
	\label{BC2-19.1}
	\left|G^{ij} u_{\beta i} \varphi_{\beta j}\right|= \bigg|\sum_{i,t}f_i \kappa_i b_{si} \gamma^{js} b_{ti} \gamma_{ \beta t} \varphi_{\beta j}\bigg| \leq C \sum_i f_i |\kappa_i|.
\end{equation}
For any indices  $j, t$, we have
\[
F^{ij} a_{it} =\sum_{i,s,p} b_{is} f_s b_{js} b_{ip} \kappa_p b_{tp} = \sum_i f_i \kappa_i b_{ji} b_{ti}.
\]
Thus,
by \eqref{GS-2}, we find
\begin{equation}
\label{BC2-11}
\begin{aligned}
\mid \sum_s G^s \mid =& \bigg| \sum_s \biggl\{ \frac{u_s}{w^2} \sum_i f_i \kappa_i + \frac{2}{w (1+w)} \sum_{t,j}F^{ij} a_{it} \big(w u_t \gamma^{sj}
+ u_j \gamma^{ts}\big) \biggr\} \bigg|\\\notag
\leq&  C \sum_i f_i |\kappa_i|.
\end{aligned}
\end{equation}
Combining \eqref{BC2-16}-\eqref{BC2-11},
we obtain
\begin{equation}
\label{BC2-26}
\begin{aligned}
LV + G^s V_s
     \leq C K \left(1 + \sum_i G^{ii} + \sum_i f_i |\kappa_i|\right) - w K\sum_{\beta \leq n - 1} \sum_i\eta_{\beta i}^2 f_i \kappa_i^2.
\end{aligned}
\end{equation}
Now we consider the term $G^s V_s$. We have, by \eqref{GS-2} and the definition of the matrix $\{b_{ij}\}$,
\begin{equation}
\label{BC2-17}
\begin{aligned}
- G^s V_s = \,& -\frac{u_s}{w^2} \sum_i f_i \kappa_i V_s - \frac{2}{w (1+w)} \sum_{t,j}F^{ij} a_{it} \big(w u_t \gamma^{sj}
+ u_j \gamma^{ts}\big) V_s\\
   = \,& -\frac{1}{w} \sum_s\left(n \psi u_s\frac{1}{w} + 2 \sum_{t,i}f_i \kappa_i (b_{ti} u_t) \gamma^{sl} b_{li}\right) V_s\\
   \leq \,& C |D V| - \frac{2}{w} \sum_{t,i}f_i \kappa_i (b_{ti} u_t) \gamma^{sl} b_{li} V_s.
\end{aligned}
\end{equation}
We consider two cases: (a) $\sum_{\beta \leq n - 1} \eta_{\beta i}^2 \geq \epsilon$ for all $i$;
and (b) $\sum_{\beta \leq n - 1} \eta_{\beta r}^2 < \epsilon$ for some index $1 \leq r \leq n$, where
$\epsilon$ is a positive constant to be chosen later.

For the case (a), by \eqref{BC2-14}, we have
\[
\sum_{\beta \leq n - 1} G^{ij} u_{\beta i} u_{\beta j}
  \geq \epsilon w \sum_{i} f_i \kappa_i^2.
\]
By Cauchy-Schwarz inequality, for any $\epsilon_0 > 0$, we have
\begin{equation}
\label{BC2-31}
\frac{2}{w} \kappa_i (b_{ti} u_t) \gamma^{sl} b_{li} V_s
  \geq -\frac{\epsilon_0}{2} \kappa_i^2 - \frac{C}{\epsilon_0 } (\gamma^{sl} b_{li} V_s)^2.
\end{equation}
Then
\[
- G^s V_s \leq C |D V| + \frac{\epsilon_0}{2} f_i \kappa_i^2 + \frac{C}{\epsilon_0} G^{ij} V_i V_j.
\]
Obviously, for any $\epsilon_1 > 0$,
$$\sum_i f_i |\kappa_i|\leq \frac{1}{2\epsilon_1} \sum_i f_i +  \frac{\epsilon_1}{2} \sum_i f_i \kappa_i^2
   \leq C\left(\frac{1}{\epsilon_1} \sum_i G^{ii}+ \epsilon_1 \sum_i f_i \kappa_i^2\right).$$
 Combining the previous four inequalities with \eqref{BC2-26}, \eqref{gj-18} follows.

For the case (b),
by lemma 4.3 of Bayard \cite{Bayard03} ,
for any $i\neq r$,
$$\sum_{\beta\leq n-1}\eta^2_{\beta i}\geq c_1$$ for some positive constant $c_1$ depending on $\theta_0$ and $n$.

In view of \eqref{BC2-14}, It follows that
\begin{equation}
\label{BC2-13}
\sum_{\beta \leq n - 1} G^{ij} u_{\beta i} u_{\beta j} \geq w c_1 \sum_{i \neq r} f_i  \kappa_i^2.
\end{equation}
If $\kappa_r \leq 0$, by Lemma 2.7 of \cite{Guan14}, we have
\[
 \sum_{i \neq r} f_i  \kappa_i^2
    \geq \frac{1}{n+1} \sum_{i = 1}^n f_i \kappa_i^2.
\]
Thus \eqref{gj-18} follows using a similar argument as the Case (a).

We need only to deal with the case $\kappa_r > 0$. Without loss of generality, we assume $r = 1$.
Now we consider two cases.

\noindent
{\bf Case (b-1).} \ $|\kappa_i| \leq \epsilon_0 \kappa_1$ for all $i\geq2$, where the positive constant $\epsilon_0$ is some positive constant to be determined.

In this case, as in Section 4, we derive,
\[
(1 - (n-2)\epsilon_0) \kappa_1 \leq \lambda_i \leq (1 + (n-2)\epsilon_0) \kappa_1, \mbox{ for } i \geq 2,
\]
where $\lambda_1, \ldots, \lambda_n$ are defined in \eqref{lambda}.
By the equation \eqref{jg--1}, we get,
\[
\lambda_1 = \frac{\psi}{\lambda_2 \cdots \lambda_n} \leq C \kappa_1^{1-n}
\]
by fixing the constant $\epsilon_0$ sufficiently small. It follows that
\[
\sigma_{n-1;i} (\lambda) = \prod_{j\neq i} \lambda_j \leq C \kappa_1^{n-2} \kappa_1^{1-n} = C \kappa_1^{-1}
   \mbox{ for } i \geq 2.
\]
and hence
\[
f_1 (\kappa) = \sum_{i \neq 1} \sigma_{n-1; i} (\lambda) \leq C \kappa_1^{-1}.
\]
Therefore,
\[
f_1 \kappa_1 \leq C.
\]
By \eqref{BC2-17} and \eqref{BC2-31} and the Cauchy-Schwarz inequality, we have, for any $\epsilon > 0$,
\begin{equation}
\label{BC2-30}
\begin{aligned}
- G^s  V_s \leq \,& C |DV| - \frac{2}{w} \sum_{i \neq 1}f_i \kappa_i (b_{ti} u_t) \gamma^{sl} b_{li} V_s\\
  \leq \,& C |DV| + C \sum_{i\neq 1} f_i |\kappa_i| |\gamma^{sl} b_{li} V_s| \\
  \leq \,& C |DV| + \epsilon \sum_{i \neq 1} f_i \kappa_i^2
     + \frac{C}{\epsilon} \sum_{i=1}^n f_i \gamma^{sl} b_{li} V_s \gamma^{tk} b_{ki} V_t\\
  \leq \,& C |DV| + \epsilon \sum_{i \neq 1} f_i \kappa_i^2 + \frac{C}{\epsilon} G^{ij} V_i V_j.
\end{aligned}
\end{equation}
By using \eqref{BC2-13} and  fixing $\epsilon$ sufficiently small, we can prove \eqref{BC2-15}.

\noindent
{\bf Case (b-2).} \ $|\kappa_{i_0}| > \epsilon_0 \kappa_1$ for some $i_0 \geq2$,

First by direct calculation, we have
\begin{equation}
\label{BC2-22}
\begin{aligned}
\left|\gamma^{sl} b_{l1} V_s\right| \leq \,& C + \left|\gamma^{sl} b_{l1} \Big((-Ae^p_n)u_{p s}
   - K\sum_{\beta \leq n-1} (u-\varphi)_\beta (u-\varphi)_{\beta s}\Big)\right|\\
     \leq \,& CK + w \left|(-Ae^p_n) \eta_{p 1} - K \sum_{\beta \leq n-1} (u-\varphi)_\beta \eta_{\beta 1}
   \right| \kappa_1\\
  \leq \,& C w (\epsilon K + 1) \kappa_1 + CK.
\end{aligned}
\end{equation}
Note that
\[
f_1 \kappa_1 = n \psi - \sum_{i \neq 1} f_i \kappa_i.
\]
We have
\[
\begin{aligned}
 \frac{2}{w} f_1 \kappa_1 \bigg|\left(\sum_tb_{t1} u_t\right) & \gamma^{sl} b_{l1} V_s\bigg| = \frac{2}{w} \big(n\psi - \sum_{i \neq 1} f_i \kappa_i\big)
    \bigg|\left(\sum_tb_{t1} u_t\right) \gamma^{sl} b_{l1} V_s\bigg| \\
 \leq \,& C |D V| + C (\epsilon K + 1) \sum_{i \neq 1} f_i |\kappa_i| \kappa_1 + C K\sum_{i\neq 1} f_i |\kappa_i|\\
\leq \,& C |D V| + \left(\epsilon_0^{-1} C (\epsilon K + 1) + \epsilon_1K\right) \sum_{i\neq 1} f_i \kappa_i^2
   + \frac{CK}{\epsilon_1} \sum_{i\neq 1} f_i
\end{aligned}
\]
for any $\epsilon_1 > 0$.
We can choose sufficiently small $\delta$, $\epsilon$ and $\epsilon_1$ and sufficiently large $K$ satisfying
\[
\epsilon_0^{-1} C (\epsilon K + 1) + \epsilon_1K < \frac{m c_1K}{4},
\]
where $m=1-\theta_0^2$ and $\theta_0$ is defined in \eqref{spacelike}.
Therefore, as in Case (b-1), \eqref{gj-18} follows.
\end{proof}
Using same arguments as in the proof of \eqref{add-6}, we can choose positive constant $R$ sufficiently large such that
$R \Psi - \tilde{V} \leq 0$ on $\bar{\omega}_\delta$,
where
\[
\tilde{V} := 1 - \exp\{- b V\}
\]
as in \eqref{new3.2}. Since $\Omega$ is convex and admissible, there exists a positive constant $a_0$ depending
only on the geometry of $\partial \Omega$ such that
\[
A (x) \geq \sum_{1 \leq \alpha \leq n-1} \sigma_{\alpha \alpha} \geq a_0 \mbox{ for all } x \in \bar{\omega}_\delta.
\]
Then we obtain an upper bound
\[
u_{nn} (x_0) \leq C.
\]
Thus, the principle curvatures $\kappa [M_u(x_0)]$ lie in \emph{a priori} compact subset $S \Subset \Gamma$.
By \eqref{ILT}, there exists a positive constant $a_1$ depending only on $S$ such that
\[
\begin{aligned}
a_1 \leq \frac{\partial S_2}{\partial u_{nn}} (D^2 u (x_0), D u (x_0)) = \,& \frac{1 - |Du(n) (x_0)|^2}{1-|Du(x_0)|^2} S_{k-1; n} (D^2 u (x_0), D u (x_0))\\
   = \,& \frac{1 - |Du(n) (x_0)|^2}{1-|Du(x_0)|^2} m.
\end{aligned}
\]
It follows that $m \geq c_0$ for some positive constant depending only on $S$ and $\theta_0$. Then \eqref{normal} and \eqref{gj-16}
follow immediately.
Theorem \ref{thm-boundary} is proved.

\bigskip


\end{document}